\newcommand{\C}{\mathbb{C}}
\newcommand{\g}{\mathfrak{g}}
\newcommand{\G}{\mathrm{G}}
\newcommand{\bdm}{\begin{displaymath}}
\newcommand{\edm}{\end{displaymath}}
\newcommand{\SU}{\mathrm{SU}}
\newcommand{\Sp}{\mathrm{Sp}}
\theoremstyle{definition}
\newtheorem{thm}{Theorem}[section]
\newtheorem{lem}[thm]{Lemma}
\newtheorem{defn}[thm]{Definition}
\newtheorem{prop}[thm]{Proposition}
\newtheorem{rem}[thm]{Remark}
\title[Extended solutions  in the special unitary group]{Extended solutions of the harmonic map equation in the special unitary group}
\author{Nuno Correia}
\address{Universidade da Beira Interior\\
Rua Marques d'Avila e Bolama, 6200-001 Covilha, Portugal}
\email{ncorreia@ubi.pt}
\author{Rui Pacheco}
\address{Universidade da Beira Interior\\
Rua Marques d'Avila e Bolama, 6200-001 Covilha, Portugal}
\email{rpacheco@ubi.pt}
\date{\today}
\thanks{The authors were partially supported by the Portuguese Government through FCT, under the project PEst-OE/MAT/UI0212/2011 (CMUBI)}
\keywords{Harmonic maps, extended solutions, special unitary group, finite uniton number.}
\begin{document}

\begin{abstract}

We classify all harmonic maps  of finite uniton number  from a Riemann surface into $\mathrm{SU}(n)$ in terms of certain pieces of the Bruhat decomposition of $\Omega_{\mathrm{alg}} \SU(n)$, the subgroup of algebraic loops in $\SU(n)$.  We give a description of the ``Frenet frame data" for such harmonic maps in a given class.
\end{abstract}

\subjclass[2010]{58E20, 53C43, 53C35.}

\maketitle

\section{Introduction}
Harmonic maps from a Riemann surface into a Lie group $\mathrm{G}$, with Lie algebra  $\mathfrak{g}$, correspond to certain holomorphic maps,
the \emph{extended solutions}, into the group $\Omega \mathrm{G}$
of based smooth loops in $\mathrm{G}$ \cite{uhlenbeck_1989}.
If the Fourier series associated to an extended solution $\Phi$ has finitely many terms, we say that $\Phi$ and the corresponding harmonic map  have \emph{finite uniton number}. It is well known that all harmonic maps from the two-sphere have finite uniton number \cite{uhlenbeck_1989}.

When $\mathrm{G}$ has trivial center, Burstall and Guest \cite{burstall_guest_1997} have classified harmonic maps with finite uniton number from a Riemann surface $M$ into $\mathrm{G}$ in terms of the pieces of the Bruhat decomposition of
$$\Omega_\mathrm{alg}\mathrm{G}=\{\gamma\in\Omega \mathrm{G} \,|\,\mbox{$\gamma$ and $\gamma^{-1}$ have finite Fourier series}\}.$$
 More precisely, each piece of the Bruhat decomposition coincides with the unstable manifold associated to the flow of the gradient vector field of a certain Morse-Bott function defined on the K\"{a}hler manifold $\Omega_\mathrm{alg}\mathrm{G}$; these unstable manifolds are parameterized by the elements of a certain integer lattice $\mathfrak{I}(\mathrm{G})$ in $\g$; any extended solution with finite uniton number takes values, off a discrete subset of $M$, in one of these unstable manifolds, and so corresponds to some element $\xi\in \mathfrak{I}(\mathrm{G})$; when $G$ has trivial center and  maximal torus with dimension $n$, there is a finite subset $\Xi_\mathrm{can}(G)$ of the integer lattice $\mathfrak{I}(\mathrm{G})$ with $2^n$ elements so that any harmonic map from $M$ to $G$ corresponds to an extended solution  with values, off a discrete subset, on the unstable manifold associated to some \emph{canonical element} $\xi\in \Xi_\mathrm{can}(G)$. Among such extended solutions, a distinguished type is that of $\mathrm{S}^1$-\emph{invariant} extended solutions, which correspond to harmonic maps admitting \emph{super-horizontal} holomorphic lifts into a certain \emph{twistor space}. For example, all harmonic spheres in $\mathrm{S}^n$ and $\C \mathrm{P}^n$ arise in this way (see \cite{eells_lemaire} and references therein).

In the present paper, we classify all  harmonic maps with finite uniton number from $M$ into the special unitary group $\mathrm{SU}(n)$ and corresponding inner symmetric spaces, the Grassmannians $\mathrm{Gr}(k,n)$ of $k$-dimensional subspaces of $\C^n$,  in terms of certain pieces of the Bruhat decomposition of $\Omega_{\mathrm{alg}} \SU(n)$.
For that we  use the results of \cite{correia_pacheco_3} in order to generalize the notion of canonical element of   $\mathfrak{I}(\mathrm{G})$ to the case where $G$ has not necessarily trivial center (recall that the center of $\SU(n)$ is isomorphic to the cyclic group $\mathbb{Z}_n$). Moreover, in the setting of the Grassmannian model for loops groups \cite{pressley_segal} we give a description of the ``Frenet frame data" for such harmonic maps in a given class. The Grassmannian model for loop groups  was exploited for the first time in the study of harmonic maps into the unitary group $\mathrm{U}(n)$ by Segal \cite{segal_1989}.  More recently, the Grassmannian model has been used in the study of harmonic maps into other  Lie groups and their inner symmetric spaces \cite{correia_pacheco_3,pacheco_2006,svensson_wood_2010}.
We remark that Ferreira, Sim\~{o}es and Wood \cite{ferreira_simoes_wood_2010} established an algebraic formula for all harmonic
maps with finite uniton number from a Riemann surface $M$ into the unitary
group $\mathrm{U}(n)$ in terms of freely chosen  meromorphic functions on $M$ and their derivatives. Since any such harmonic map has constant determinant, this formula can be easily applied in order to obtain all
harmonic maps with finite uniton number from $M$ into  $\mathrm{SU}(n)$. However, it does not clarifies how to choose the meromorphic functions in order to produce  harmonic maps associated to extended solutions in the class of a given  element $\xi\in  \Xi_\mathrm{can}(\mathrm{SU}(n))$.
In this paper we shall  see how to do that in the case of harmonic maps associated to $\mathrm{S}^1$-invariant extended solutions.

\section{Grassmannian model for loop groups}
Let us start by recalling from Pressley and Segal \cite{pressley_segal} some standard definitions and facts concerning the Grassmannian model for loop groups.

Fix on $\mathbb{C}^{n}$ the standard complex inner product $\langle
\cdot,\cdot\rangle$ and let $e_{1},\ldots,e_{n}$ be the standard basis vectors for
 $\mathbb{C}^{n}$. Given a complex subspace $V\subset \C^n$, we denote by $\pi_V$ the orthogonal projection onto $V$.
Let $H^n$ be the Hilbert space
 of square-summable $\C^{n}$-valued
functions on the
 circle and $ \langle\cdot,\cdot\rangle_H$ the induced complex inner product. This is the closed space generated by
the functions
$\lambda\mapsto\lambda^{i}e_{j}$, with $i\in\mathbb{Z}$ and $j=1,\ldots,n$.
 Consider the closed subspace $H_+^n$  of $H^n$ defined by
 $H_+^n=\mbox{Span}\{ \lambda^{i}e_{j}\,|\,i\geq 0,\,
 j=1,\ldots,n\}.$
 Let $\mbox{\emph{Grass}}(H^n)$ denote the set of all closed vector
 subspaces $W\subset H^n$ such that:
 the projection map $W\rightarrow H_+^n$ is
 Fredholm, and the projection map $W\rightarrow
 {H_+^n}^\perp$ is Hilbert-Schmidt;
the images of the projection maps $W^{\perp}\rightarrow H_+^n$, $W\rightarrow
 {H_+^n}^\perp$ are contained in $C^{\infty}(\mathrm{S}^{1};\mathbb{C}^{n})$.  Define
$$\mathrm{Gr}^n=\{W\in \mbox{\emph{Grass}}(H^n)\,|\, \lambda W\subseteq
W \}.$$
The action of the infinite-dimensional Lie group
 $\Lambda \mathrm{U}(n)=\big\{\gamma:S^1\to \mathrm{U}(n)\,|\, \mbox{$\gamma$ is smooth}\big\}$
 on $\mathrm{Gr}^n$ defined by $\gamma W=\{\gamma f\,|\,f\in W\}$ is transitive. By considering Fourier series, it is easy to see that the
 isotropy subgroup at $H_+^n$ is precisely
 ${\mathrm{U}}(n)$. Hence
 $\mathrm{Gr}^n \cong \Lambda \mathrm{U}(n)/\mathrm{U}(n)\cong \Omega \mathrm{U}(n).$  This homogeneous space carries a natural invariant structure of K\"{a}hler manifold.

\begin{rem}\label{rem} Given $W\in \mathrm{Gr}^n$, then
$\dim W \ominus \lambda W=n$, where $W \ominus
\lambda W$ denotes the orthogonal complement of $\lambda W$ in
$W$, and the evaluation map $\rm{ev}_\lambda:W\ominus \lambda W:\to \C^n$ at $\lambda\in\mathrm{S}^1$ is a unitary isomorphism. If we choose an orthonormal basis for $W\ominus\lambda W$,
$\{w_1,\ldots,w_n\}$, we can put the vector-valued functions
$w_i$ side by side to form an $(n\times n)$-matrix valued
function $\gamma$  on $\mathrm{S}^1$, that is, a loop $\gamma\in\Lambda
{\mathrm{U}}(n)$. It can be shown  that $W=\gamma  H^n_+$.
\end{rem}

A loop
 $\gamma \in\Omega \mathrm{U}(n)$ is said to be \emph{algebraic} if both $\gamma$ and $\gamma^{-1}$ have finite
Fourier series. Denote by
$\Omega_{\rm{alg}}\mathrm{U}(n)$ the subgroup of algebraic loops. This
subgroup acts on
$$\mathrm{Gr}^n_{\mathrm{alg}}=\{W \in \mathrm{Gr}^n\,|\,
\lambda^k H_+^n \subseteq W\subseteq \lambda^{-k}
H_+^n\,\,\textrm{for some } k\in\mathbb{N} \},
$$
and we have  $\mathrm{Gr}^n_{\mathrm{alg}}\cong
 \Omega_{\mathrm{alg}}{\mathrm{U}}(n)$. Given $r\leq k$, we set $$\Omega_r^k\mathrm{U}(n)=\big\{\gamma\in \Omega_{\mathrm{alg}} \mathrm{U}(n)\,|\,\gamma(\lambda)=\sum_{i=r}^{k}\gamma_i\lambda^i\big\},$$  where the coefficients $\gamma_i$ are constant $(n\times n)$-complex matrices.
 If $\mathrm{G}$ is a subgroup of $\mathrm{U}(n)$, we shall denote by $\mathrm{Gr}(\mathrm{G})$ the subspace of $\mathrm{Gr}^n$ that corresponds to $\Omega \mathrm{G}$ and by ${\mathrm{Gr}}_{\mathrm{alg}}(\mathrm{G})$ the subspace of $\mathrm{Gr}(\mathrm{G})$ that corresponds to $\Omega_{\mathrm{alg}}\mathrm{G}$.

 \section{The Bruhat Decomposition of $\mathrm{Gr}_{\mathrm{alg}}(\mathrm{G})$}
Next we describe the Bruhat decomposition for algebraic loop groups. For more details we refer the reader to \cite{burstall_guest_1997} and \cite{pressley_segal}.

Consider  a compact matrix semi-simple Lie group $\mathrm{G}$. Fix a maximal torus $\mathrm{T}$ of $\mathrm{G}$ with Lie algebra $\mathfrak{t}\subset \mathfrak{g}$, let $\Delta\subset \sqrt{-1} \mathfrak{t}^*$ be the corresponding
set of roots and  denote by $\g_\alpha$ the root space of  $\alpha\in \Delta$. The integer lattice $\mathfrak{I}(\mathrm{G}) =
(2\pi)^{-1} \exp^{-1}(e)\cap \mathfrak{t}$ may be identified with the group of homomorphisms $\mathrm{S}^1\to \mathrm{T}$, by
associating to $\xi\in \mathfrak{I}(\mathrm{G})$ the homomorphism
$\gamma_\xi$ defined by $\gamma_\xi(\lambda)=\exp{(-\sqrt{-1}\ln(\lambda)\xi)}$.
Let  $H_1,\ldots,H_k\in \mathfrak{t}$ be dual to the positive simple roots $\alpha_1,\ldots,\alpha_k\in \Delta^+$ of $\g^\C$:
$\alpha_i(H_j)=\sqrt{-1}\,\delta_{ij}$. By applying the well-known formula  $\mathrm{Ad}(\exp(\eta))=\exp(\mathrm{ad}(\eta))$, for all $\eta\in\g^\C$, we can easily check that
the integer lattice  $\mathfrak{I}(G)$ is contained in $\mathbb{Z}H_1\oplus\ldots\oplus \mathbb{Z}H_k$.
 Denote by $\g^\xi_i$ the $\sqrt{-1}\,i$-eigenspace of $\mathrm{ad}{\xi}$, with $i\in\mathbb{Z}$.
We have on $\mathfrak{g}^\C$ the structure of graded Lie algebra:
\begin{equation*}
\g^\C=\!\!\!\bigoplus_{i\in\{-r(\xi)\ldots,r(\xi)\}}\!\!\!\mathfrak{g}^\xi_i,\quad [\mathfrak{g}^\xi_i, \mathfrak{g}^\xi_j]\subset \mathfrak{g}^\xi_{i+j},
\end{equation*}
 where $r(\xi)=\mathrm{max}\{i\,|\,\,\g_i^\xi\neq 0\}$, and
 \begin{equation}\label{gis}
\g_i^\xi=\!\!\bigoplus_{\alpha(\xi)=\sqrt{-1}\,i}\!\!\g_\alpha.
\end{equation}

Set $\Lambda^+\mathrm{G}^\C=\{\gamma:S^1\to \mathrm{G}^\C\,|\,\,\mbox{$\gamma$ extends holomorphically for $|\lambda|\leq 1$}\}$.
For each
$\xi\in \mathfrak{I}(\mathrm{G})$, we write
$\Omega_\xi=\{g\gamma_\xi g^{-1} \,|\,\, g \in \mathrm{G}\},$ the conjugacy class of homomorphisms $\mathrm{S}^1\to \mathrm{G}$ which contains $\gamma_\xi$.
This is a complex homogeneous space:
$$\Omega_\xi\cong \mathrm{G}^\C\big/\mathrm{P}_\xi,\,\mbox{with}\,\mathrm{P}_\xi=\mathrm{G}^\C\cap \gamma_\xi\Lambda^+\mathrm{G}^\C\gamma_\xi^{-1}.$$
Taking account that
  $\gamma_\xi X_j\gamma_\xi^{-1}=\lambda^jX_j$ for each $X_j\in \g^\xi_j$ (this is a direct consequence of the formula $\mathrm{Ad}(\exp(\eta))=\exp(\mathrm{ad}(\eta))$, for all $\eta\in\g^\C$), one can easily check  that the Lie algebra of the isotropy subgroup $\mathrm{P}_\xi$ is precisely the parabolic subalgebra $\mathfrak{p}_\xi=\bigoplus_{i\leq 0}\g^\xi_i$ induced by $\xi$.

Now, fix a positive set of roots $\Delta^+\subset \Delta$ and set $\mathfrak{I}'(\mathrm{G})=\{\xi\in\mathfrak{I}(\mathrm{G})|\, \mbox{$\alpha(\xi)\geq 0$ for all $\alpha\in \Delta^+$}\}.$ We have:

\begin{thm}\cite{pressley_segal}
 \emph{Bruhat decomposition: }$\mathrm{Gr}_\mathrm{alg}(\mathrm{G})=\bigcup_{\xi\in \mathfrak{I}'(\mathrm{G})}\Lambda^+_\mathrm{alg}\mathrm{G}^\C\gamma_\xi  H_+^n$.
\end{thm}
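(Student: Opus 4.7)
The plan is to interpret the pieces $\Lambda^+_\mathrm{alg}\mathrm{G}^\C\gamma_\xi H^n_+$ as unstable manifolds of the natural $\mathrm{S}^1$-rotation action on the K\"{a}hler manifold $\mathrm{Gr}_\mathrm{alg}(\mathrm{G})$ --- an interpretation alluded to in the Introduction --- and combine this with a Weyl-group reduction to cut the indexing down from $\mathfrak{I}(\mathrm{G})$ to its dominant chamber $\mathfrak{I}'(\mathrm{G})$.

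First, I would identify the rotation-fixed set. The $\mathrm{S}^1$-action $(s,W)\mapsto m_s W$, with $(m_s f)(\lambda)=f(s\lambda)$, is induced by the rotation action $s\cdot \gamma(\lambda)=\gamma(s\lambda)\gamma(s)^{-1}$ on $\Omega_\mathrm{alg}\mathrm{G}$, whose fixed points are precisely the group homomorphisms $\mathrm{S}^1\to \mathrm{G}$. Every such homomorphism is conjugate into the maximal torus $\mathrm{T}$, and since $\mathrm{G}$ sits inside $\Lambda^+_\mathrm{alg}\mathrm{G}^\C$ as constant loops, the conjugating element can be absorbed into $\Lambda^+_\mathrm{alg}\mathrm{G}^\C$, so the fixed set is represented by $\{\gamma_\xi H^n_+ : \xi\in \mathfrak{I}(\mathrm{G})\}$.

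Second, I would show that the $\Lambda^+_\mathrm{alg}\mathrm{G}^\C$-orbit of $\gamma_\xi H^n_+$ coincides with the unstable manifold at that fixed point under the gradient flow of the energy functional $E(\gamma)=\tfrac{1}{4\pi}\int_{\mathrm{S}^1}|\gamma^{-1}\gamma'|^2\,d\theta$. Translating by $\gamma_\xi^{-1}$, the tangent space to $\mathrm{Gr}_\mathrm{alg}(\mathrm{G})$ at $\gamma_\xi H^n_+$ inherits from $\g^\C=\bigoplus_i \g^\xi_i$ and from the loop parameter a natural bigrading; the positive $\mathrm{S}^1$-weight directions are then realised precisely by the Lie algebra of $\Lambda^+_\mathrm{alg}\mathrm{G}^\C$ modulo the parabolic isotropy $\p_\xi=\bigoplus_{i\leq 0}\g^\xi_i$ (using the relation $\gamma_\xi X_j\gamma_\xi^{-1}=\lambda^j X_j$ already recorded in the excerpt). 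Exponentiation produces the orbit, and infinite-dimensional Morse--Bott theory --- applicable because $E$ is a moment map for the rotation action on the K\"{a}hler manifold $\mathrm{Gr}_\mathrm{alg}(\mathrm{G})$, with Hessian controlled by the Fredholm structure built into the definition of $\mathrm{Gr}^n$ --- ensures that the gradient flow from any $W$ converges to a fixed point, placing $W$ in one of these orbits.

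Finally, to reduce the indexing to $\mathfrak{I}'(\mathrm{G})$, I would invoke Weyl-group equivalence: for any $w$ in the Weyl group, choose a lift $\widetilde{w}\in N_\mathrm{G}(\mathrm{T})\subset \mathrm{G}\subset \Lambda^+_\mathrm{alg}\mathrm{G}^\C$; the identity $\mathrm{Ad}(\exp(\eta))=\exp(\mathrm{ad}(\eta))$ then yields $\widetilde{w}\gamma_\xi\widetilde{w}^{-1}=\gamma_{w\cdot\xi}$, so $\xi$ and $w\cdot\xi$ index the same $\Lambda^+_\mathrm{alg}\mathrm{G}^\C$-orbit; since $\mathfrak{I}'(\mathrm{G})$ is a fundamental domain for the Weyl-group action on $\mathfrak{I}(\mathrm{G})$, the union over $\xi\in \mathfrak{I}'(\mathrm{G})$ already covers $\mathrm{Gr}_\mathrm{alg}(\mathrm{G})$. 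The most delicate step is the second one --- matching the $\Lambda^+$-orbits with unstable manifolds at each fixed point --- which demands careful bookkeeping of the rotation weights at every fixed-point component and an honest invocation of the infinite-dimensional Morse--Bott theorem; an alternative, purely algebraic route would instead push through the Birkhoff factorization of $\Lambda_\mathrm{alg}\mathrm{G}^\C$ and absorb the lower-triangular factor by a careful rewriting modulo the stabilizer of $H^n_+$.
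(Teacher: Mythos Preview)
The paper does not prove this theorem at all: it is stated with a citation to Pressley--Segal \cite{pressley_segal} and no argument is given, since it is a foundational result being imported rather than established here. So there is nothing in the paper to compare your sketch against.

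That said, your sketch is a faithful outline of the standard argument. The Morse--Bott route you describe --- rotation-fixed points are homomorphisms, $\Lambda^+_\mathrm{alg}\mathrm{G}^\C$-orbits are unstable manifolds for the energy, Weyl reduction cuts down to $\mathfrak{I}'(\mathrm{G})$ --- is exactly the interpretation used in \cite{pressley_segal} (Chapter~8) and in \cite{burstall_guest_1997}. One technical caution: rather than invoking an abstract infinite-dimensional Morse--Bott theorem, the usual argument in the algebraic setting is more concrete. The $\mathrm{S}^1$-action extends to a $\C^*$-action because $\mathrm{Gr}_\mathrm{alg}(\mathrm{G})$ is a direct limit of finite-dimensional projective varieties, and the limit $\lim_{s\to 0} s\cdot W$ exists algebraically; this realises the gradient flow without any Fredholm analysis and places each $W$ in the unstable manifold of its limit point. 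Your alternative Birkhoff-factorisation route is also standard and arguably cleaner for this particular statement: factor $\gamma\in\Lambda_\mathrm{alg}\mathrm{G}^\C$ as $\gamma_-\,\delta\,\gamma_+$ with $\gamma_\pm\in\Lambda^\pm_\mathrm{alg}\mathrm{G}^\C$ and $\delta$ a homomorphism, then observe $\gamma H^n_+=\gamma_-\delta H^n_+$ and conjugate $\delta$ into the dominant chamber. Either way, your identification of the key steps and of where the real work lies is accurate.
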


Define $U_\xi(\mathrm{G})\subset \Omega_{\mathrm{alg}}\mathrm{G}$   by  $U_\xi(\mathrm{G})  H_+^n=\Lambda^+_{\mathrm{alg}}\mathrm{G}^\C\gamma_\xi H^n_+.$ This is also a  complex homogeneous space
of $\Lambda^+_{\mathrm{alg}}\mathrm{G}^\C$
with isotropy subgroup at $\gamma_\xi$ given by $\Lambda^+_{\mathrm{alg}}\mathrm{G}^\C\cap \gamma_\xi \Lambda^+\mathrm{G}^\C \gamma_\xi^{-1}.$ Moreover,  $U_\xi(\mathrm{G})$ carries the structure of holomorphic vector bundle over $\Omega_\xi$ whose bundle map
$u_\xi:U_\xi(\mathrm{G})\to \Omega_\xi$ is precisely the natural map
  $[\gamma]\mapsto [\gamma(0)]$.  The holomorphic tangent bundle of $U_\xi(\mathrm{G})$ is given by
\begin{equation}\label{holm}
T^{1,0}U_\xi(\mathrm{G})\cong \Lambda^+_{\mathrm{alg}}\mathrm{G}^\C\times_{\Lambda^+_{\mathrm{alg}}\mathrm{G}^\C\cap \gamma_\xi\Lambda^+\mathrm{G}^\C\gamma_\xi^{-1}}\Lambda^+_{\mathrm{alg}}\mathfrak{g}^\C\big/\Lambda^+_{\mathrm{alg}}\mathfrak{g}^\C\cap \gamma_\xi\Lambda^+\mathfrak{g}^\C\gamma_\xi^{-1}
\end{equation}

In terms of the Grassmannian model, the  bundle map $u_\xi:U_\xi(\mathrm{G})\to\Omega_\xi$ can be described as follows.
 Take $\gamma\in U_\xi(\mathrm{G})$ and $W=\gamma H^n_+\in \mathrm{Gr}_{\mathrm{alg}}(\mathrm{G})$, with
  $\lambda^rH^n_+\subset W\subset\lambda^{-s}H^n_+$.
Fix $\Psi\in \Lambda^+_{\mathrm{alg}}\mathrm{G}^\C$ such that
$W=\Psi\gamma_\xi H^n_+$. Write
\begin{equation*}\label{flag}
\gamma_\xi H^n_+=\lambda^{-s}A^\xi_{-s}+\ldots+\lambda^{r-1}A^\xi_{r-1}+\lambda^rH^n_+,
\end{equation*} where the subspaces $A^\xi_i$ define a flag
\begin{equation}\label{flag1}
\{0\}=A^\xi_{-s-1}\subsetneq A^\xi_{-s}\subseteq A^\xi_{-s+1}\subseteq \ldots\subseteq A^\xi_{r-1}\subsetneq A^\xi_r=\C^n.
\end{equation}
Set $A_i=\Psi(0)A^\xi_i= p_i(W\cap\lambda^iH^n_+),$ where $p_i: H^n \to\C^n$ is defined by
\begin{equation}\label{pi}
p_i(\sum\lambda^ja_j)=a_i.
\end{equation} Then
\begin{equation}\label{popo1}
u_\xi(W)=\lambda^{-s}A_{-s}+\ldots+\lambda^{r-1}A_{r-1}+\lambda^rH^n_+.
\end{equation}

%

Following \cite{correia_pacheco_3}, consider the partial order $\preceq$ over $\mathfrak{I}'(\mathrm{G})$ defined by:
$\xi\preceq \xi'$ if $\mathfrak{p}^{\xi}_i\subseteq \mathfrak{p}^{\xi'}_i$
for all $i\geq 0$, where $\mathfrak{p}_i^\xi=\bigoplus_{j\leq i}\g_j^\xi$.
Given two elements $\xi,\xi'\in \mathfrak{I}'(\mathrm{G})$ such that $\xi\preceq \xi'$, it can be shown \cite{correia_pacheco_3} that
   $$\Lambda^+_{\mathrm{alg}}\mathrm{G}^\C\cap \gamma_\xi \Lambda^+\mathrm{G}^\C \gamma_\xi^{-1}\subseteq \Lambda^+_{\mathrm{alg}}\mathrm{G}^\C\cap \gamma_{\xi'} \Lambda^+\mathrm{G}^\C \gamma_{\xi'}^{-1}.$$
This allows us to define, for $\xi\preceq \xi'$, a $\Lambda^+_{\mathrm{alg}}\mathrm{G}^\C$-invariant fibre bundle morphism
$\mathcal{U}_{\xi,\xi'}:U_\xi\to U_{\xi'}$  by
\begin{equation*}\label{uxi}
\mathcal{U}_{\xi,\xi'}(\Psi\gamma_{\xi}H^n_+)=\Psi\gamma_{\xi'}H^n_+, \quad \Psi\in\Lambda^+_{\mathrm{alg}}\mathrm{G}^\C.\end{equation*}
Since the holomorphic structures on  $U_\xi(\mathrm{G})$ and $U_{\xi'}(\mathrm{G})$ are induced by the holomorphic structure on $\Lambda^+_{\mathrm{alg}}\mathrm{G}^\C$, the fibre-bundle morphism  $\mathcal{U}_{\xi,\xi'}$ is holomorphic.

\section{Harmonic maps into a Lie group}

\subsection{Extended solutions}

Let $M$ be a Riemann surface and  $\varphi:M\rightarrow \mathrm{G}\subseteq \mathrm{U}(n)$  a map into
a compact matrix Lie group. Equip $\mathrm{G}$ with a bi-invariant metric.
Define $\alpha=\varphi^{-1}{d}\varphi$ and let $\alpha=\alpha'+\alpha''$
be the type decomposition of $\alpha$ into $(1,0)$ and
$(0,1)$-forms. It is well known \cite{uhlenbeck_1989}
 that $\varphi:M\rightarrow \mathrm{G} $ is harmonic if and only if the loop of
$1$-forms given by
\begin{equation}
\label{flcon}
 \alpha_\lambda=\frac{1-\lambda^{-1}}{2}
\alpha'+\frac{1-\lambda}{2} \alpha''
 \end{equation}
 satisfies the Maurer-Cartan equation ${d}\alpha_\lambda + \frac{1}{2}[\alpha_\lambda\wedge \alpha_\lambda]=0$
 for each $\lambda\in \mathrm{S}^1$.
Then, if $M$ is simply connected and $\varphi$ is harmonic, we can
integrate to obtain a map $\Phi:M\rightarrow \Omega \mathrm{G}$, the \textit{extended solution} associated to $\varphi$, such that
$\alpha_\lambda=\Phi_\lambda^{-1}{d}\Phi_\lambda$ and $\Phi_{-1}=\varphi$. Conversely, if $\Phi:M\rightarrow \Omega \mathrm{G}$ is an extended solution, that is if it integrates a loop of $1$-form of the form \eqref{flcon}, then $\varphi=\Phi_{-1}:M\rightarrow \mathrm{G}$ is harmonic.


\begin{thm}\cite{burstall_guest_1997}\label{usd}
\emph{Let $\Phi:M\to \Omega_{\mathrm{alg}}\mathrm{G}$ be an extended solution. Then there exists some $\xi\in \mathfrak{I}'(\mathrm{G})$, and some discrete subset $D$ of $M$, such that $\Phi(M\setminus D)\subseteq U_\xi(\mathrm{G})$.}
  \end{thm}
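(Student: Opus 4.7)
The plan is to exploit the holomorphicity of $\Phi$ as a map into the K\"{a}hler manifold $\Omega \mathrm{G}$ together with the fact that the Bruhat pieces $U_\xi(\mathrm{G})$ stratify $\mathrm{Gr}_{\mathrm{alg}}(\mathrm{G})$ by locally closed complex submanifolds. The Bruhat decomposition already furnishes, for each $p\in M$, a unique $\xi(p)\in\mathfrak{I}'(\mathrm{G})$ with $\Phi(p)\in U_{\xi(p)}(\mathrm{G})$; what must be shown is that $\xi(p)$ is constant off a discrete subset of $M$.

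First I would recall that an extended solution is holomorphic with respect to the invariant complex structure on $\Omega\mathrm{G}$ coming from the Grassmannian model. Indeed, the prescribed form $\alpha_\lambda=\frac{1-\lambda^{-1}}{2}\alpha'+\frac{1-\lambda}{2}\alpha''$ forces the $(0,1)$-part of $\Phi^{-1}d\Phi$ to take values in $\Lambda^+\mathfrak{g}^\C$, which matches, via the description of $T^{1,0}U_\xi(\mathrm{G})$ given above, the infinitesimal condition for $\Phi$ to be holomorphic into each $U_\xi(\mathrm{G})$. Hence $\Phi$ is a holomorphic curve in $\mathrm{Gr}_{\mathrm{alg}}(\mathrm{G})$.

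Next, I would select $\xi$ by a maximality argument relative to the closure partial order $\preceq$ on $\mathfrak{I}'(\mathrm{G})$. Among those $\xi$ for which $\Phi^{-1}(U_\xi(\mathrm{G}))$ has nonempty interior in $M$, one picks $\xi$ maximal. Such a $\xi$ exists because locally $\Phi(M)$ lies in some finite-dimensional piece of the filtration $\{W\,|\,\lambda^kH^n_+\subseteq W\subseteq \lambda^{-k}H^n_+\}$, so only finitely many cells are candidates. Then any $p\in M$ with $\xi(p)\neq \xi$ must satisfy $\Phi(p)\in \overline{U_\xi(\mathrm{G})}\setminus U_\xi(\mathrm{G})$: otherwise $p$ would have a neighbourhood on which $\xi(\cdot)$ is locally constant and equal to some $\xi'\not\preceq \xi$, contradicting the maximality of $\xi$. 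Setting $D=\Phi^{-1}(\overline{U_\xi(\mathrm{G})}\setminus U_\xi(\mathrm{G}))$ then finishes the argument, because the boundary is a proper closed analytic subvariety of $\overline{U_\xi(\mathrm{G})}$ and holomorphic preimages of proper analytic subsets in a Riemann surface are discrete.

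The main obstacle is the infinite-dimensional complex-analytic bookkeeping: one has to justify that Bruhat closures are analytic subvarieties and that their complements in the closure are lower-dimensional, so that preimages of boundaries under a holomorphic curve are discrete rather than all of $M$. This is handled most cleanly via the Morse--Bott description of the $U_\xi(\mathrm{G})$ as unstable manifolds of the energy functional under the natural $\mathrm{S}^1$-action on $\Omega\mathrm{G}$, whose closure relations recover precisely the order $\preceq$ on $\mathfrak{I}'(\mathrm{G})$ and reduce the verification to standard finite-dimensional Morse theory.
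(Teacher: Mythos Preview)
The paper does not supply a proof of this theorem: it is quoted verbatim from \cite{burstall_guest_1997}, so there is no in-paper argument to compare against. Your outline is in fact the Burstall--Guest strategy (holomorphicity of extended solutions combined with the Bruhat/Morse--Bott stratification of $\Omega_{\mathrm{alg}}\mathrm{G}$ by locally closed complex cells, reduced to a finite-dimensional slice), so in that sense you are reproducing the cited proof rather than offering an alternative.

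Two points in your write-up should be tightened. First, be careful with notation: in this paper $\preceq$ is defined by $\xi\preceq\xi'\Leftrightarrow\mathfrak p_i^{\xi}\subseteq\mathfrak p_i^{\xi'}$ for all $i\geq 0$, and this is \emph{not} the Bruhat closure order on the strata $U_\xi(\mathrm G)$; using the same symbol for both invites confusion with the later material on $\mathcal U_{\xi,\xi'}$. Second, the ``pick $\xi$ maximal'' step is both unnecessary and, as written, not quite justified: the sentence ``otherwise $p$ would have a neighbourhood on which $\xi(\cdot)$ is locally constant'' does not follow from $\Phi(p)\notin\overline{U_\xi(\mathrm G)}$ alone. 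The clean version is: after passing to a finite-dimensional slice, $\overline{U_\xi(\mathrm G)}$ is a closed analytic subvariety, so $\Phi^{-1}(\overline{U_\xi(\mathrm G)})$ is a closed analytic subset of the connected Riemann surface $M$ and is therefore either discrete or all of $M$; choosing any $\xi$ for which $\Phi^{-1}(U_\xi(\mathrm G))$ has nonempty interior (possible by Baire, since only finitely many cells meet the slice) forces $\Phi^{-1}(\overline{U_\xi(\mathrm G)})=M$, and then $D=\Phi^{-1}\big(\overline{U_\xi(\mathrm G)}\setminus U_\xi(\mathrm G)\big)$ is discrete because the boundary is a proper analytic subset. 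No order-theoretic comparison of cells is required.
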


Given a smooth map $\Phi:M\setminus D\to U_\xi(\mathrm{G})$, consider $\Psi:M\setminus D \to \Lambda_{\mathrm{alg}}^+\mathrm{G}^\C$ such that $\Phi H^n_+=\Psi\gamma_\xi H^n_+$. Clearly,
 $\Psi\gamma_\xi=\Phi b$ for some $b:M\setminus D\to \Lambda^+_{\mathrm{alg}}G^\C.$
Write
\begin{equation*}\label{not}
\Psi^{-1}\Psi_z=\sum_{i\geq 0} X'_i\lambda^i,\,\,\,\,\Psi^{-1}\Psi_{\bar{z}}=\sum_{i\geq 0} X''_i\lambda^i.\end{equation*}
Proposition 4.4 in  \cite{burstall_guest_1997} establishes that   $\Phi$ is an extended solution if, and only if,
\begin{equation}\label{im}
\mathrm{Im} X'_i\subset \,\mathfrak{p}^\xi_{i+1},\,\,\,\,\mathrm{Im} X''_i\subset \mathfrak{p}^\xi_{i},
\end{equation}
where $\mathfrak{p}_i^\xi=\bigoplus_{j\leq i}\g_j^\xi.$ The second condition says that $\Phi:M\setminus D\to U_\xi(\mathrm{G})$ is holomorphic.

The bundle morphism $\mathcal{U}_{\xi,\xi'}$ and the bundle map $u_\xi$ are well behaved with respect to extended solutions:

\begin{thm}\label{popo}\cite{correia_pacheco_3}
\emph{Given an extended solution $\Phi:M\setminus D\to U_\xi(\mathrm{G})$ and an element $\xi'\in \mathfrak{I}'(\mathrm{G})$ such that $\xi\preceq \xi'$, then
$\mathcal{U}_{\xi,\xi'}(\Phi)=\mathcal{U}_{\xi,\xi'}\circ \Phi:M\setminus D\to U_{\xi'}(\mathrm{G})$ is a new extended solution.}
 \end{thm}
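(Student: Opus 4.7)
The plan is to reduce this to a direct application of the Burstall--Guest characterization (\ref{im}) by observing that $\mathcal{U}_{\xi,\xi'}$ can be represented by the same lift $\Psi$ that represents $\Phi$, so the Fourier coefficients of the Maurer--Cartan form are identical for $\Phi$ and $\mathcal{U}_{\xi,\xi'}(\Phi)$, and the only thing that changes is the parabolic against which we test them.

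More concretely, I would argue as follows. Working locally on $M\setminus D$, choose a smooth lift $\Psi:M\setminus D\to \Lambda^+_{\mathrm{alg}}\mathrm{G}^\C$ with $\Phi H^n_+=\Psi\gamma_\xi H^n_+$; such $\Psi$ exists since $U_\xi(\mathrm{G})H^n_+=\Lambda^+_{\mathrm{alg}}\mathrm{G}^\C\gamma_\xi H^n_+$. By the very definition of the morphism $\mathcal{U}_{\xi,\xi'}$ we have
\begin{equation*}
\mathcal{U}_{\xi,\xi'}(\Phi)\,H^n_+=\Psi\gamma_{\xi'}H^n_+,
\end{equation*}
so the same $\Psi$ serves as a lift of $\mathcal{U}_{\xi,\xi'}(\Phi)$ into $\Lambda^+_{\mathrm{alg}}\mathrm{G}^\C$ relative to $\gamma_{\xi'}$. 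Consequently the Fourier coefficients
\begin{equation*}
\Psi^{-1}\Psi_z=\sum_{i\geq 0}X'_i\lambda^i,\qquad \Psi^{-1}\Psi_{\bar z}=\sum_{i\geq 0}X''_i\lambda^i
\end{equation*}
that enter the characterization (\ref{im}) are literally the same for $\Phi$ and for $\mathcal{U}_{\xi,\xi'}(\Phi)$.

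Since $\Phi$ is an extended solution, (\ref{im}) gives $\mathrm{Im}\,X'_i\subset \mathfrak{p}^\xi_{i+1}$ and $\mathrm{Im}\,X''_i\subset \mathfrak{p}^\xi_i$ for every $i\geq 0$. The assumption $\xi\preceq \xi'$ means exactly $\mathfrak{p}^\xi_j\subseteq\mathfrak{p}^{\xi'}_j$ for all $j\geq 0$, so in particular $\mathfrak{p}^\xi_{i+1}\subseteq\mathfrak{p}^{\xi'}_{i+1}$ and $\mathfrak{p}^\xi_i\subseteq\mathfrak{p}^{\xi'}_i$ for all $i\geq 0$. Hence
\begin{equation*}
\mathrm{Im}\,X'_i\subset \mathfrak{p}^{\xi'}_{i+1},\qquad \mathrm{Im}\,X''_i\subset \mathfrak{p}^{\xi'}_i,
\end{equation*}
which by (\ref{im}), applied now with $\xi'$ in place of $\xi$, says precisely that $\mathcal{U}_{\xi,\xi'}(\Phi):M\setminus D\to U_{\xi'}(\mathrm{G})$ is an extended solution.

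There is no real obstacle here: the nontrivial content has already been absorbed into the definition of the partial order $\preceq$ and into the Burstall--Guest characterization (\ref{im}). The only point that requires a moment of care is that the characterization is insensitive to the choice of local lift $\Psi$; this is precisely what makes it legitimate to reuse the lift of $\Phi$ as the lift of $\mathcal{U}_{\xi,\xi'}(\Phi)$, and is what the statement of Proposition~4.4 in \cite{burstall_guest_1997} ensures.
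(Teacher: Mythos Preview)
Your argument is correct. The paper does not give its own proof of this theorem; it is quoted from \cite{correia_pacheco_3} without proof, so there is nothing to compare against here. Your approach is in any case the natural one and is exactly what the surrounding setup in the paper (the definition of $\preceq$ via the inclusions $\mathfrak{p}^\xi_i\subseteq\mathfrak{p}^{\xi'}_i$ together with the Burstall--Guest characterization (\ref{im})) is designed to make immediate: the same lift $\Psi$ represents both $\Phi$ and $\mathcal{U}_{\xi,\xi'}(\Phi)$, so the coefficients $X'_i,X''_i$ are unchanged and the only effect of passing from $\xi$ to $\xi'$ is to enlarge the parabolics in which their images must lie. There is nothing to add.
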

\begin{thm}\cite{burstall_guest_1997}
\emph{If  $\Phi:M\setminus D\to U_\xi(\mathrm{G})$ is an extended solution, then  $u_\xi\circ\Phi:M\setminus D\to \Omega_\xi$ is an extended solution.}
\end{thm}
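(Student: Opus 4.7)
The plan is to apply the characterization \eqref{im} to $u_\xi\circ \Phi$ by producing an explicit lift $\tilde{\Psi}: M\setminus D \to \Lambda^+_{\mathrm{alg}}\mathrm{G}^\C$ with $(u_\xi\circ\Phi)H^n_+ = \tilde{\Psi}\gamma_\xi H^n_+$ and reading off the Fourier coefficients of $\tilde{\Psi}^{-1}d\tilde{\Psi}$. Since $\mathrm{G} \subset \Lambda^+_{\mathrm{alg}}\mathrm{G}^\C$ as constant loops, the conjugacy class $\Omega_\xi$ sits inside $U_\xi(\mathrm{G})$, so the criterion \eqref{im} is available for $u_\xi\circ\Phi$ viewed as a map into $U_\xi(\mathrm{G})$.

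Fix a local lift $\Psi$ of $\Phi$, so that $\Phi H^n_+ = \Psi\gamma_\xi H^n_+$ and the Fourier coefficients $X'_i$, $X''_i$ satisfy the hypotheses of \eqref{im}. First I would set $\tilde{\Psi}(z,\lambda) := \Psi(z,0)$, the $\lambda$-constant loop taking values in $\mathrm{G}^\C$. Since the constant matrix $\Psi(z,0)$ preserves $H^n_+$ and acts slice-wise on each Fourier mode, substituting into the flag expansion $\gamma_\xi H^n_+ = \sum_i \lambda^iA^\xi_i + \lambda^rH^n_+$ yields $\tilde{\Psi}\gamma_\xi H^n_+ = \sum_i\lambda^i\Psi(z,0)A^\xi_i+\lambda^rH^n_+$, which matches \eqref{popo1} via $A_i = \Psi(z,0)A^\xi_i$. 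So $\tilde{\Psi}$ is indeed a lift of $u_\xi\circ \Phi$.

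With this lift in place the image conditions are automatic. Because $\tilde{\Psi}$ has no $\lambda$-dependence, the series $\tilde{\Psi}^{-1}\tilde{\Psi}_z$ and $\tilde{\Psi}^{-1}\tilde{\Psi}_{\bar z}$ are $\lambda$-constant, so $\tilde{X}'_i=\tilde{X}''_i=0$ for $i\geq 1$, while $\tilde{X}'_0 = \Psi(z,0)^{-1}\Psi_z(z,0)$ and $\tilde{X}''_0 = \Psi(z,0)^{-1}\Psi_{\bar z}(z,0)$ coincide with the $\lambda=0$ evaluations of $\Psi^{-1}\Psi_z$ and $\Psi^{-1}\Psi_{\bar z}$, i.e.\ with the constant-term coefficients $X'_0$ and $X''_0$. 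The inclusions $\mathrm{Im}\,X'_0\subset \mathfrak{p}^\xi_1$ and $\mathrm{Im}\,X''_0\subset \mathfrak{p}^\xi_0$ from the hypothesis on $\Phi$ therefore give the required image conditions for $\tilde{\Psi}$, the higher-order conditions being vacuous, and $u_\xi\circ\Phi$ is an extended solution.

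The main obstacle I expect is not depth but bookkeeping: one must verify that the Grassmannian-model recipe \eqref{popo1} is realized by the constant loop $\Psi(\cdot,0)$, and that evaluating Fourier series at $\lambda=0$ commutes correctly with the logarithmic derivative so that $\tilde{X}'_0 = X'_0$ and $\tilde{X}''_0 = X''_0$. Once these identifications are firmly in place, the image conditions transfer verbatim from $\Phi$ to $u_\xi\circ\Phi$ without further argument.
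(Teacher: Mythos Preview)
The paper does not give its own proof of this theorem: it is stated with a citation to \cite{burstall_guest_1997} and no argument is supplied. So there is nothing in the paper to compare your proposal against directly.

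That said, your argument is correct and is essentially the intended one. The key observation is exactly what you identify: if $\Psi$ is a $\Lambda^+_{\mathrm{alg}}\mathrm{G}^\C$-lift of $\Phi$, then the $\lambda$-constant loop $\tilde{\Psi}=\Psi(\cdot,0)$ is a lift of $u_\xi\circ\Phi$, and since $\tilde{\Psi}^{-1}d\tilde{\Psi}$ has only a degree-zero Fourier coefficient, the criterion \eqref{im} for $u_\xi\circ\Phi$ reduces to the $i=0$ case of the criterion for $\Phi$. Your bookkeeping worries are unfounded: evaluation at $\lambda=0$ is a group homomorphism $\Lambda^+_{\mathrm{alg}}\mathrm{G}^\C\to\mathrm{G}^\C$, so $(\Psi^{-1}\Psi_z)|_{\lambda=0}=\Psi(0)^{-1}\Psi(0)_z$ is automatic, and the description \eqref{popo1} with $A_i=\Psi(0)A^\xi_i$ is precisely the statement that $\tilde{\Psi}\gamma_\xi H^n_+=u_\xi(\Phi)H^n_+$. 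This is the standard proof, and it is in fact how the result is argued in \cite{burstall_guest_1997} (their Proposition~4.4 and the surrounding discussion).
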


An \emph{$\mathrm{S}^1$-invariant extended solution} is an extended solution   which takes values in $\Omega_\xi$, for some $\xi\in \mathfrak{I}'(\mathrm{G})$.

\subsection{Harmonic maps into inner $\mathrm{G}$-symmetric spaces}

Given a compact  (connected) Lie group $\mathrm{G}$,
 each connected component of $\sqrt{e}=\{g\in
\mathrm{G}\,|\,\,g^2=e\}$ is a compact inner symmetric space \cite{burstall_guest_1997}. Conversely, any compact (connected) inner  $\mathrm{G}$-symmetric space may be immersed in $\mathrm{G}$ as a connected component of $\sqrt{e}$.
 Moreover,
the embedding of each component of $\sqrt{e}$ in $\mathrm{G}$ is totally
geodesic. Hence harmonic maps into $\mathrm{G}$-inner symmetric spaces can be viewed as special harmonic maps into $\mathrm{G}$.

As in \cite{burstall_guest_1997},  define the involution $\mathcal{I}:\Omega \mathrm{G}  \rightarrow \Omega \mathrm{G}$ by $\mathcal{I}(\gamma)(\lambda)
=\gamma(-\lambda)\gamma(-1)^{-1}.$ Write $$\Omega^\mathcal{I}
\mathrm{G}=\{\gamma\in \Omega \mathrm{G}\,|\,\,\mathcal{I}(\gamma)=\gamma\}$$
 for the fixed set of $\mathcal{I}$. Let $M$ be a Riemann surface and
 $\Phi:M\rightarrow \Omega^\mathcal{I} \mathrm{G}$ an extended solution. Then
 $\varphi=\Phi_{-1}$ defines a harmonic map from $M$ into a
 connected component of $\sqrt{e}$.
Conversely, if $\varphi:M\rightarrow\sqrt{e}$ is a harmonic map,  there exists an extended
 solution $\Phi:M\rightarrow \Omega^\mathcal{I} \mathrm{G}$ such that
 $\varphi=\Phi_{-1}$. Under the identification $\Omega \mathrm{G}\cong  \mathrm{Gr}(\mathrm{G})$,
$\mathcal{I}$ induces
 an involution on $\mathrm{Gr}(\mathrm{G})$, that we shall also denote by $\mathcal{I}$,
 and $\Omega^\mathcal{I} \mathrm{G}$ can be identified with
 \bdm
\mathrm{Gr}^\mathcal{I}(\mathrm{G})=\{W\in \mathrm{Gr}(\mathrm{G})\,|\,\,
 \,\mbox{if $s(\lambda)\in W$ then $s(-\lambda)\in W$}\}.
 \edm
Corresponding to the extended solution $\Phi:M\rightarrow \Omega^\mathcal{I}
\mathrm{G}$, consider $W=\Phi H_+:M \rightarrow
\mathrm{Gr}^\mathcal{I}(\mathrm{G})$.

For each $\xi \in \mathfrak{I}'(\mathrm{G})$ we can associate the symmetric space $N_\xi=\{g\gamma_\xi(-1)g^{-1}\,|\,\,g\in G$\}.
If an extended solution takes values in $U_\xi^{\mathcal{I}}(\mathrm{G})=U_\xi(\mathrm{G})\cap \Omega^{\mathcal{I}}\mathrm{G}$, then the corresponding harmonic map takes values in $N_\xi$.
Observe that, for $\xi$ and $\xi'$ in  $\mathfrak{I}'(\mathrm{G})$, if $\xi-\xi'\in\mathfrak{I}^{2}(\mathrm{G}):=\pi^{-1}\exp^{-1}(e)\cap \mathfrak{t}$, then $N_\xi=N_{\xi'}$.
Moreover, as shown in \cite{correia_pacheco_3},
 if $\xi\preceq \xi'$, then $\mathcal{U}_{\xi,\xi'}(U_\xi^{\mathcal{I}}(\mathrm{G}))\subset U_{\xi'}^{\mathcal{I}}(\mathrm{G})$. To sum up, if we define a new partial order $\preceq_\mathcal{I}$ in $\mathfrak{I}'(\mathrm{G})$ by
\begin{equation*}
 \xi\preceq_\mathcal{I}\xi'\,\,\, \mbox{ if}\,\,\, \xi\preceq \xi' \,\,\, \mbox{and}\,\,\, \xi-\xi'\in\mathfrak{I}^{2}(\mathrm{G}),
\end{equation*}
 the following holds:
\begin{prop}\label{proposition}
 \emph{ If $\xi\preceq_\mathcal{I} \xi'$, then $\mathcal{U}_{\xi,\xi'}(U_\xi^{\mathcal{I}}(\mathrm{G}))\subset U_{\xi'}^{\mathcal{I}}(\mathrm{G})$ and $N_\xi=N_{\xi'}$.}
\end{prop}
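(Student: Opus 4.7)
The plan is to observe that this proposition is essentially a packaging statement: the definition of $\preceq_{\mathcal{I}}$ bundles two independent hypotheses, and each of the two conclusions follows from one of them via a fact already recorded in the paragraph preceding the proposition.

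First, for the inclusion $\mathcal{U}_{\xi,\xi'}(U_\xi^{\mathcal{I}}(\mathrm{G})) \subset U_{\xi'}^{\mathcal{I}}(\mathrm{G})$, I would simply invoke the statement quoted above from \cite{correia_pacheco_3}, which requires only the first half of the hypothesis, namely $\xi \preceq \xi'$. Nothing additional needs to be done here.

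Second, for $N_\xi = N_{\xi'}$, I would unwind the definition $N_\xi = \{g \gamma_\xi(-1) g^{-1} \mid g \in \mathrm{G}\}$ and show that it suffices to check $\gamma_\xi(-1) = \gamma_{\xi'}(-1)$, since equality of the two elements forces equality of the conjugacy classes. Evaluating the homomorphism $\gamma_\xi(\lambda) = \exp(-\sqrt{-1}\ln(\lambda)\xi)$ at $\lambda = -1$ yields $\gamma_\xi(-1) = \exp(\pi\xi)$, so the condition $\gamma_\xi(-1) = \gamma_{\xi'}(-1)$ reduces to $\exp(\pi(\xi - \xi')) = e$, which is exactly the assumption $\xi - \xi' \in \mathfrak{I}^2(\mathrm{G}) = \pi^{-1}\exp^{-1}(e) \cap \mathfrak{t}$ that constitutes the second half of $\preceq_{\mathcal{I}}$.

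There is no real obstacle: the proof is essentially a two-line verification. The genuine content lies entirely in the \emph{definition} of $\preceq_{\mathcal{I}}$, which has been engineered so that both conclusions drop out immediately; the proposition is really a convenient restatement packaging the order $\preceq$ together with the $\mathcal{I}$-invariance condition on the symmetric space.
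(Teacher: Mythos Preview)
Your proposal is correct and matches the paper's treatment exactly: the paper gives no separate proof of this proposition, introducing it with ``To sum up'' as a packaging of the two observations already made in the preceding paragraph (the inclusion from \cite{correia_pacheco_3} under $\xi\preceq\xi'$, and $N_\xi=N_{\xi'}$ under $\xi-\xi'\in\mathfrak{I}^2(\mathrm{G})$). Your additional unwinding of $\gamma_\xi(-1)=\exp(\pi\xi)$ is a welcome spelling-out of the second observation, which the paper leaves implicit.
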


\subsection{Extended solutions from the Grassmannian point of view}

 Let $W:{M} \rightarrow \mathrm{Gr}(\mathrm{G})$ correspond to a smooth map $\Phi:M\to \Omega \mathrm{G}$ under the
identification $\Omega \mathrm{G} \cong \mathrm{Gr}(\mathrm{G})$, that is $W=\Phi
H^n_+$.
Segal \cite{segal_1989} has observed that $\Phi$ is an extended solution if, and only if, $W$ is a
solution of equations:
\begin{align}
W_z  &\subseteq  {\lambda}^{-1}W \label{phh},\\  W_{\bar{z}}
 & \subseteq W.\label{hh}
\end{align}
Condition \eqref{phh} means that, in any local complex coordinate  $z$, $\frac{\partial s}{\partial z}(z)$
is contained in the subspace $\lambda^{-1}W(z)$ of $H^n$, for every
(smooth) map  $s : M\rightarrow H^n$ such that $s(z)\in
W(z)$. Inspired by \cite{burstall_rawnsley_1990} (Section F of Chapter 8), we call \eqref{phh} the \emph{pseudo-horizontality} condition. Condition \eqref{hh} is interpreted in a similar way and states that $W$ is a holomorphic vector subbundle of $M\times H^n$.

\begin{rem}
  Consider some discrete set $D\subset M$, an element $\xi\in \mathfrak{I}'(\mathrm{G})$ and an extended solution  $\Phi:M\setminus D\to U_\xi(\mathrm{G})$. As explained in Remark 2.5 of \cite{correia_pacheco_2}, the bundle
  $W=\Phi H^n_+$ can be extended holomorphically to $M$, and, consequently,
$\Phi$ defines a global extended solution from  $M$ to $\Omega_{\mathrm{alg}}\mathrm{G}$.
\end{rem}

If $\Phi:M\setminus D\to U_\xi(\mathrm{G})$ is an extended solution and $W=\Phi H^n_+$, then $u_\xi(W)=u_\xi\circ\Phi H_+^n$ is given pointwise by
(\ref{popo1}) and we get holomorphic subbundles $A_i$ of the trivial bundle $\underline{\C}^n=M\times \C^n$ such that
\begin{equation}\label{super}
0\subsetneq A_{-s} \subseteq \ldots \subseteq A_{r-1} \subsetneq A_r=\underline{\C}^n.
\end{equation}
 The pseudo-horizontally condition  implies that ${A_i}_z\subseteq A_{i+1}$, that is, following again the terminology of
\cite{burstall_rawnsley_1990}, the flag of holomorphic vector bundles \eqref{super} is \emph{super-horizontal}.

\subsection{Normalization of extended solutions}

The following theorem, which is a  generalization of Theorem 4.5 in \cite{burstall_guest_1997}, is fundamental to the classification of extended solutions.

\begin{thm}\label{nor}\cite{correia_pacheco_3}
\emph{ Let $\Phi:M\setminus D\to U_\xi(\mathrm{G})$ be an extended solution. Take $\xi'\in \mathfrak{I}'(\mathrm{G})$ such that $\xi\preceq {\xi'}$ and $\g_0^\xi=\g_0^{\xi'}$. Then there exists some constant loop $\gamma\in \Omega_{\mathrm{alg}}\mathrm{G}$ such that $\gamma\Phi:M\setminus D\to U_{\xi'}(\mathrm{G})$.}
\end{thm}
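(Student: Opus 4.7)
The plan is to take $\gamma:=\gamma_{\xi'-\xi}$ as the constant loop; this lies in $\Omega_{\mathrm{alg}}\mathrm{G}$ because $\xi'-\xi\in\mathfrak{I}(\mathrm{G})$ defines a rational homomorphism $\mathrm{S}^1\to\mathrm{T}$. Writing $\Phi H^n_+=\Psi\gamma_\xi H^n_+$ with $\Psi:M\setminus D\to\Lambda^+_{\mathrm{alg}}\mathrm{G}^{\C}$ and using the additivity $\gamma\gamma_\xi=\gamma_{\xi'}$, one obtains
$$(\gamma\Phi)H^n_+=\bigl(\mathrm{Ad}(\gamma)\Psi\bigr)\gamma_{\xi'}H^n_+=:\tilde\Psi\,\gamma_{\xi'}H^n_+.$$
Thus the task reduces to showing that, after a suitable gauge choice of $\Psi$ via the isotropy $\Lambda^+_{\mathrm{alg}}\mathrm{G}^{\C}\cap\gamma_\xi\Lambda^+\mathrm{G}^{\C}\gamma_\xi^{-1}$, the map $\tilde\Psi:=\mathrm{Ad}(\gamma)\Psi$ takes values in $\Lambda^+_{\mathrm{alg}}\mathrm{G}^{\C}$ and satisfies \eqref{im} relative to $\xi'$.

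Next I would expand $\Psi(z)(\lambda)=\sum_{j\geq 0}\Psi_j(z)\lambda^j$, decompose each coefficient into $\mathrm{ad}(\xi'-\xi)$-eigenspaces as $\Psi_j=\sum_k(\Psi_j)_k^{\xi'-\xi}$, and use that $\mathrm{Ad}(\gamma(\lambda))$ multiplies the $k$-th eigenspace by $\lambda^k$. This yields
$$\tilde\Psi(z)(\lambda)=\sum_{j\geq 0,\,k}(\Psi_j)_k^{\xi'-\xi}(z)\,\lambda^{j+k},$$
so polynomiality in $\lambda$ amounts to $(\Psi_j)_k^{\xi'-\xi}=0$ whenever $j+k<0$. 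The key arithmetic input, deduced from $\xi\preceq\xi'$ and $\g_0^\xi=\g_0^{\xi'}$, is: every root $\alpha$ satisfies $\alpha(\xi)\cdot\alpha(\xi')\geq 0$ with $\alpha(\xi')=0\Leftrightarrow\alpha(\xi)=0$ and $|\alpha(\xi')|\leq|\alpha(\xi)|$, so $\mathrm{ad}(\xi'-\xi)$ has opposite sign to $\mathrm{ad}(\xi)$ on each root space and strictly smaller modulus. Combining this with \eqref{im} and an inductive normalization of $\Psi$ via the isotropy Lie algebra (whose order-$j$ component lies in $\mathfrak{p}^\xi_j$) should establish the required vanishing.

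Once $\tilde\Psi\in\Lambda^+_{\mathrm{alg}}\mathrm{G}^{\C}$, the extended-solution conditions relative to $\xi'$ follow from $\tilde\Psi^{-1}d\tilde\Psi=\mathrm{Ad}(\gamma)\bigl(\Psi^{-1}d\Psi\bigr)$: the grading shift induced by $\mathrm{Ad}(\gamma)$, together with the defining inclusion $\mathfrak{p}^\xi_i\subseteq\mathfrak{p}^{\xi'}_i$ of $\xi\preceq\xi'$, converts $\mathrm{Im}(X'_i)\subseteq\mathfrak{p}^\xi_{i+1}$ and $\mathrm{Im}(X''_i)\subseteq\mathfrak{p}^\xi_i$ into $\mathrm{Im}(\tilde X'_i)\subseteq\mathfrak{p}^{\xi'}_{i+1}$ and $\mathrm{Im}(\tilde X''_i)\subseteq\mathfrak{p}^{\xi'}_i$.

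The hard part will be the inductive gauge normalization: achieving the required bound on the $\mathrm{ad}(\xi'-\xi)$-grading of each Fourier coefficient $\Psi_j$ forces one to coordinate gauge adjustments across all orders of $\lambda$, since naive Iwasawa-type normalizations (placing $\Psi_0$ in the opposite unipotent radical of $\mathrm{P}_\xi^\C$) put the grading in the wrong half-line. The hypothesis $\g_0^\xi=\g_0^{\xi'}$ enters essentially here, since it guarantees that $\xi'-\xi$ lies in the centre of $\g_0^\xi$, so that the Levi part of the isotropy commutes with the grading by $\xi'-\xi$ and the recursion can be closed order by order.
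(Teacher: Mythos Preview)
Your approach has a genuine gap: the constant loop $\gamma$ cannot in general be taken to be the fixed element $\gamma_{\xi'-\xi}$, because this choice does not depend on $\Phi$, whereas the correct $\gamma$ must. The gauge freedom you invoke is illusory here: changing $\Psi$ to $\Psi b$ with $b$ in the isotropy at $\gamma_\xi$ does not change $\Phi$, and the condition $\gamma\Phi\in U_{\xi'}(\mathrm{G})$ is a property of $\Phi$ alone. Concretely, take $\mathrm{G}=\mathrm{U}(2)$, $\xi=2E_1$, $\xi'=E_1$ (so $\xi\preceq\xi'$ and $\g_0^\xi=\g_0^{\xi'}=\mathfrak{t}^\C$), and the constant extended solution $\Phi=g\gamma_\xi g^{-1}=\mathrm{diag}(1,\lambda^2)$ with $g=\left(\begin{smallmatrix}0&1\\-1&0\end{smallmatrix}\right)$. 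Then $\gamma_{\xi'-\xi}\Phi=\mathrm{diag}(\lambda^{-1},\lambda^2)$, and one checks directly that $\mathrm{diag}(\lambda^{-1},\lambda^2)H_+^2$ does not contain $\lambda H_+^2$, hence cannot lie in $U_{\xi'}H_+^2=\{W:\lambda H_+^2\subset W\subset H_+^2,\ \dim W/\lambda H_+^2=1\}$. No gauge normalization of $\Psi$ can repair this, since the failure is intrinsic to $\gamma_{\xi'-\xi}\Phi$. (Note also that in this example $g$ lies outside the big cell $N_-P_\xi$, so the Iwasawa-type normalization you allude to is unavailable even at order zero.)

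The paper does not prove this theorem itself (it is quoted from \cite{correia_pacheco_3}), but the proof of the companion Theorem~\ref{nor2} in the paper exhibits the correct mechanism, which carries over. One sets $\hat\xi=\xi-\xi'$ and observes that the hypotheses give $\xi\preceq\hat\xi$ and, crucially, $\mathfrak{p}^\xi_{i+1}\subseteq\mathfrak{p}^{\hat\xi}_i$ for all $i\geq 0$ (this is exactly where $\g_0^\xi=\g_0^{\xi'}$ enters: a root with $\alpha(\xi)=\sqrt{-1}(i+1)$ must have $\alpha(\xi')\neq 0$, hence $\alpha(\hat\xi)\leq\sqrt{-1}\,i$). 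Combined with the extended-solution equations \eqref{im}, this forces $\mathrm{Im}\,\Psi^{-1}d\Psi\subseteq\Lambda^+_{\mathrm{alg}}\mathfrak{g}^\C\cap\gamma_{\hat\xi}\Lambda^+\mathfrak{g}^\C\gamma_{\hat\xi}^{-1}$, so by \eqref{holm} the map $\mathcal{U}_{\xi,\hat\xi}(\Phi)$ is both holomorphic and anti-holomorphic, hence equal to a constant $\gamma^{-1}\in U_{\hat\xi}(\mathrm{G})$. Writing $\Psi\gamma_{\hat\xi}=\gamma^{-1}b$ with $b:M\setminus D\to\Lambda^+\mathrm{G}^\C$ then gives $\gamma\Phi H_+^n=b\gamma_{\xi'}H_+^n\subset U_{\xi'}(\mathrm{G})H_+^n$. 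In the example above this produces $\gamma=g\gamma_{\xi'-\xi}g^{-1}$, a conjugate of your proposed loop that genuinely depends on $\Phi$.
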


A similar statement holds for extended solutions associated to harmonic maps into symmetric spaces:

\begin{thm}\label{nor2} \emph{ Let $\Phi:M\setminus D\to U^{\mathcal{I}}_\xi(\mathrm{G})$ be an extended solution. Take $\xi'\in \mathfrak{I}'(\mathrm{G})$ such that $\xi\preceq_\mathcal{I} {\xi'}$. Then there exists some constant loop $\gamma\in \Omega^{\mathcal{I}}_{\mathrm{alg}}\mathrm{G}$ such that
$\gamma\Phi:M\setminus D\to U^{\mathcal{I}}_{\xi'}(\mathrm{G})$.}
\end{thm}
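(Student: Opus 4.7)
My plan is to deduce Theorem \ref{nor2} by adapting the proof of Theorem \ref{nor} with $\mathcal{I}$-equivariance built in. The non-symmetric version supplies a constant loop $\gamma\in\Omega_{\mathrm{alg}}\mathrm{G}$ with $\gamma\Phi:M\setminus D\to U_{\xi'}(\mathrm{G})$; the new content is to show that under $\xi\preceq_{\mathcal{I}}\xi'$ we may choose $\gamma$ in $\Omega^{\mathcal{I}}_{\mathrm{alg}}\mathrm{G}$, and that $\gamma\Phi$ is then automatically $\mathcal{I}$-invariant.

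The key observation is that every homomorphism $\gamma_\eta$ satisfies $\mathcal{I}(\gamma_\eta)=\gamma_\eta$. Indeed, from $\gamma_\eta(\lambda)=\exp(-\sqrt{-1}\ln(\lambda)\eta)$ one computes $\gamma_\eta(-\lambda)=\gamma_\eta(\lambda)\gamma_\eta(-1)$, so $\mathcal{I}(\gamma_\eta)(\lambda)=\gamma_\eta(-\lambda)\gamma_\eta(-1)^{-1}=\gamma_\eta(\lambda)$. Moreover, $\xi-\xi'\in\mathfrak{I}^{2}(\mathrm{G})$ is equivalent to $\exp(\pi(\xi-\xi'))=e$, i.e.\ $\gamma_\xi(-1)=\gamma_{\xi'}(-1)$. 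I would then revisit the construction of $\gamma$ in the proof of Theorem \ref{nor} from \cite{correia_pacheco_3}: there $\gamma$ is built from the singular (negative-$\lambda$) part of an $\Lambda^+_{\mathrm{alg}}\mathrm{G}^{\C}$-type factorisation of a loop obtained from $\Phi$ and $\gamma_{\xi'-\xi}$, read off in the $\xi$-grading of $\g^{\C}$. Because $\Phi\in U_\xi^{\mathcal{I}}(\mathrm{G})$, $\gamma_{\xi'-\xi}\in\Omega^{\mathcal{I}}_{\mathrm{alg}}\mathrm{G}$, and $\gamma_\xi(-1)=\gamma_{\xi'}(-1)$, the involution $\mathcal{I}$ preserves each piece of this factorisation, and the negative part that defines $\gamma$ satisfies $\mathcal{I}(\gamma)=\gamma$. (If the precise hypothesis of Theorem \ref{nor} requires $\g_0^\xi=\g_0^{\xi'}$, one proceeds by factoring the passage $\xi\leadsto\xi'$ into elementary steps compatible with $\preceq_{\mathcal{I}}$, each of which satisfies the zero-grade condition; the same $\mathcal{I}$-equivariance argument is then applied stepwise.)

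To conclude, $\gamma\Phi$ lies in $U_{\xi'}(\mathrm{G})$ by Theorem \ref{nor} and in $\Omega^{\mathcal{I}}\mathrm{G}\cong\mathrm{Gr}^{\mathcal{I}}(\mathrm{G})$ by Proposition \ref{proposition}: the bundle morphism $\mathcal{U}_{\xi,\xi'}$ sends $U_\xi^{\mathcal{I}}(\mathrm{G})$ into $U_{\xi'}^{\mathcal{I}}(\mathrm{G})$ and the associated harmonic map lies in $N_{\xi'}=N_\xi$, so $\gamma\Phi$ is forced into $U_{\xi'}^{\mathcal{I}}(\mathrm{G})$. The main technical obstacle I foresee is precisely the $\mathcal{I}$-equivariance of the factorisation that produces $\gamma$: this amounts to showing that the involution $\mathcal{I}$ preserves the $\xi$-graded decomposition of $\Lambda^+_{\mathrm{alg}}\g^{\C}$ used in the normalisation, and the hypothesis $\xi-\xi'\in\mathfrak{I}^{2}(\mathrm{G})$ — encoded in the equality $\gamma_\xi(-1)=\gamma_{\xi'}(-1)$ — is exactly what supplies this compatibility at $\lambda=-1$.
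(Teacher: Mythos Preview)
Your proposal has a genuine gap. You want to invoke Theorem~\ref{nor}, but that theorem carries the extra hypothesis $\g_0^\xi=\g_0^{\xi'}$, which is \emph{not} assumed in Theorem~\ref{nor2} (the relation $\preceq_{\mathcal{I}}$ only asks for $\xi\preceq\xi'$ and $\xi-\xi'\in\mathfrak{I}^2(\mathrm{G})$). Your suggested workaround---factoring $\xi\leadsto\xi'$ into elementary $\preceq_{\mathcal{I}}$-steps each satisfying the zero-grade condition---is not justified: there is no reason such a factorisation exists compatibly with the $\mathfrak{I}^2(\mathrm{G})$ constraint. Your description of how $\gamma$ is produced in the proof of Theorem~\ref{nor} (``the singular negative-$\lambda$ part of a $\Lambda^+_{\mathrm{alg}}\mathrm{G}^\C$-type factorisation'') is also too vague to verify the $\mathcal{I}$-equivariance you need; and in your last paragraph you conflate $\gamma\Phi$ with $\mathcal{U}_{\xi,\xi'}(\Phi)$ when appealing to Proposition~\ref{proposition}.

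The paper's proof does not go through Theorem~\ref{nor} at all. The key idea you are missing is to introduce the auxiliary element $\hat\xi=\xi-\xi'\in\mathfrak{I}^2(\mathrm{G})$ and study $\mathcal{U}_{\xi,\hat\xi}(\Phi)$ (not $\mathcal{U}_{\xi,\xi'}(\Phi)$). Because $\Phi$ is $\mathcal{I}$-invariant one may choose $\Psi$ with $\Phi H_+^n=\Psi\gamma_\xi H_+^n$ containing only \emph{even} powers of $\lambda$, so $\Psi^{-1}\Psi_z=\sum_j X'_{2j}\lambda^{2j}$. Because $\hat\xi\in\mathfrak{I}^2(\mathrm{G})$, every root takes an even value on $\hat\xi$, hence all odd graded pieces $\g^{\hat\xi}_{2j+1}$ vanish and $\mathfrak{p}^{\hat\xi}_{2j+1}=\mathfrak{p}^{\hat\xi}_{2j}$. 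Together with $\xi\preceq\hat\xi$ and \eqref{im} this puts $\mathrm{Im}\,\Psi^{-1}\Psi_z$ inside $\bigoplus_{j\geq0}\mathfrak{p}^{\hat\xi}_{2j}\lambda^{2j}\subset\Lambda^+_{\mathrm{alg}}\g^\C\cap\gamma_{\hat\xi}\Lambda^+\g^\C\gamma_{\hat\xi}^{-1}$, which by \eqref{holm} makes $\mathcal{U}_{\xi,\hat\xi}(\Phi)$ anti-holomorphic. Since it is also holomorphic (Theorem~\ref{popo}) it equals a constant $\gamma^{-1}$, and Proposition~\ref{proposition} gives $\gamma^{-1}\in\Omega^{\mathcal{I}}_{\mathrm{alg}}\mathrm{G}$ directly. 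Writing $\Psi\gamma_{\hat\xi}=\gamma^{-1}b$ and using $\gamma_{\hat\xi}^{-1}\gamma_\xi=\gamma_{\xi'}$ then yields $\gamma\Phi H_+^n=b\gamma_{\xi'}H_+^n$, so $\gamma\Phi$ lands in $U^{\mathcal{I}}_{\xi'}(\mathrm{G})$.
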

\begin{proof}
We can write $\Phi H_+^n=\Psi \gamma_\xi H_+^n$, where $\Psi:M\setminus D\to  \Lambda_{\mathrm{alg}}^+G^\C$ contains only even powers of $\lambda$, and consequently   $\Psi^{-1}\Psi_z=\sum_{i\geq 0}X'_i\lambda^i$ contains only even powers of $\lambda$. The extended solution equation \eqref{im} gives $\mathrm{Im}\,X'_{2j} \subset \mathfrak{p}^{\xi}_{2j+1}$ for all $j\geq 0$.  Set $\hat{\xi}=\xi-\xi'\in \mathfrak{I}^2(\mathrm{G})$. Clearly $\xi\preceq \hat{\xi}$, hence $\mathfrak{p}^\xi_{2j+1}\subseteq  \mathfrak{p}^{\hat{\xi}}_{2j+1}$ for all $j\geq 0$. On the other hand, since $\alpha(\hat{\xi})=2\sqrt{-1}\mathbb{Z}$ for any positive root $\alpha$, we have  $\mathfrak{g}^{\hat{\xi}}_{2j+1}=0$  and, consequently,  $\mathfrak{p}^{\hat{\xi}}_{2j+1}=\mathfrak{p}^{\hat{\xi}}_{2j}$. Hence,
$$\mathrm{Im}\,\Psi^{-1}\Psi_z \subseteq \bigoplus_{j\geq 0} \mathfrak{p}^\xi_{2j+1}\lambda^{2j}\subseteq \bigoplus_{j\geq 0} \mathfrak{p}^{\hat{\xi}}_{2j}\lambda^{2j}\subseteq \Lambda^{+}_{\mathrm{alg}}\mathfrak{g}^\C\cap \gamma_{\hat{\xi}}\Lambda^+\mathfrak{g}^\C\gamma_{\hat{\xi}}^{-1}.$$ Taking account \eqref{holm}, we conclude that $\mathcal{U}_{\xi,\hat{\xi}}(\Phi)$ is anti-holomorphic.
On the other hand, since any extended solution is holomorphic and $\Phi$ is an extended solution,  Theorem \ref{popo} asserts that
 $\mathcal{U}_{\xi,\hat{\xi}}(\Phi)$ is also holomorphic. Being both holomorphic and anti-holomorphic, it must be equal to a constant loop $\gamma^{-1}$. By   Proposition \ref{proposition} we have $\gamma^{-1} \in \Omega^{\mathcal{I}}_{\mathrm{alg}} \mathrm{G}$. Write $\Psi \gamma_{\hat{\xi}}=\gamma^{-1}b$, for some map $b:M\to \Lambda^+\mathrm{G}$.
Then
$$\Phi H^n_+=\Psi \gamma_\xi H^n_+= \gamma^{-1}b  \gamma_{\hat{\xi}}^{-1}\gamma_\xi H^n_+= \gamma^{-1}b  \gamma_{\xi'} H^n_+,$$
which implies that $\gamma \Phi$ takes values in $U^{\mathcal{I}}_{\xi'}(\mathrm{G})$.
\end{proof}

Given $\xi=\sum n_iH_i$ and $\xi'=\sum n'_iH_i$ in $\mathfrak{I}'(\mathrm{G})$,  we have $n_i,n'_i\geq 0$ and observe that  $\xi\preceq\xi'$ if and only if $n'_i\leq n_i$ for all $i$.
For each $I\subseteq \{1,\ldots,k\}$, define the cone
$$\mathfrak{C}_{I}=\Big\{\sum_{i=1}^k n_i H_i|\, n_i\geq 0, \,\mbox{$n_j=0$ iff $j\notin I$}\Big\}.$$

\begin{defn}
   \emph{Let $\xi\in\mathfrak{I}'(\mathrm{G})\cap \mathfrak{C}_{I}$. We say that $\xi$ is a \emph{$I$-canonical element} of $\mathfrak g$ with respect to  $\Delta^+$ if it is a maximal element of $(\mathfrak{I}'(\mathrm{G})\cap \mathfrak{C}_{I},\preceq)$, that is,  if $\xi\preceq \xi'$ and $\xi'\in \mathfrak{I}'(\mathrm{G})\cap \mathfrak{C}_{I}$ then $\xi=\xi'$. Similarly, we say that  $\xi$ is a \emph{symmetric canonical element} of $\mathfrak g$ with respect to  $\Delta^+$ if it is a maximal element of $(\mathfrak{I}'(\mathrm{G}),\preceq_{\mathcal {I}})$ }
\end{defn}

 When $G$ has trivial center, which is the case considered in \cite{burstall_guest_1997}, the duals $H_1,\ldots,H_k$ belong to the integer lattice. Then, for each $I$ there exists a unique $I$-canonical element, which is given by $\xi_I=\sum_{i\in I}H_i$.

\begin{thm}
\emph{  Let  $\Phi:M \to \Omega_{\rm{alg}}\mathrm{G}$ be an extended solution. There exist a constant loop $\gamma\in \Omega_{\rm{alg}}\mathrm{G}$, a subset $I\subseteq \{1,\ldots,k\}$, a $I$-canonical element $\xi'$ and a discrete subset $D\subset M$, such that
$\gamma\Phi(M\setminus D)\subseteq U_{\xi'}(\mathrm{G})$.}
\end{thm}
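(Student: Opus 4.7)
My plan is to apply Theorem \ref{usd} and then use the normalization Theorem \ref{nor} to improve the target of the extended solution to an $I$-canonical element. First, I would invoke Theorem \ref{usd} to produce some $\xi_0 \in \mathfrak{I}'(\mathrm{G})$ and a discrete subset $D \subset M$ with $\Phi(M\setminus D) \subseteq U_{\xi_0}(\mathrm{G})$. Writing $\xi_0 = \sum_{i=1}^k n_i H_i$, I would then let $I = \{i : n_i > 0\}$, so that $\xi_0 \in \mathfrak{I}'(\mathrm{G}) \cap \mathfrak{C}_I$.

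Next I would produce an $I$-canonical element $\xi'$ satisfying $\xi_0 \preceq \xi'$. Using the characterization of $\preceq$ recorded just before the definition of canonical element ($\xi_0 \preceq \eta$ iff $n'_i \leq n_i$ for all $i$), the set
$$S = \{\eta \in \mathfrak{I}'(\mathrm{G}) \cap \mathfrak{C}_I : \xi_0 \preceq \eta\}$$
is finite and nonempty, so it possesses a $\preceq$-maximal element $\xi'$. This $\xi'$ is in fact maximal in the whole of $\mathfrak{I}'(\mathrm{G}) \cap \mathfrak{C}_I$: if $\eta \in \mathfrak{I}'(\mathrm{G}) \cap \mathfrak{C}_I$ and $\xi' \preceq \eta$, then $\xi_0 \preceq \eta$, so $\eta \in S$, and maximality of $\xi'$ in $S$ forces $\eta = \xi'$. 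Hence $\xi'$ is $I$-canonical.

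The only substantive remaining point, and the one I expect to be the main obstacle, is to verify the hypothesis $\mathfrak{g}_0^{\xi_0} = \mathfrak{g}_0^{\xi'}$ needed to invoke Theorem \ref{nor}. I would argue that for any $\eta = \sum_j n'_j H_j \in \mathfrak{C}_I$ and any root $\alpha = \sum_j c_j \alpha_j$, the identity $\alpha_i(H_j) = \sqrt{-1}\,\delta_{ij}$ gives $\alpha(\eta) = \sqrt{-1}\sum_{j \in I} c_j n'_j$; since the coefficients $c_j$ of a root share a common sign and the $n'_j$ with $j \in I$ are strictly positive, $\alpha(\eta) = 0$ is equivalent to $c_j = 0$ for all $j \in I$, a condition depending only on $I$ and not on the particular positive values $n'_j$. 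Via \eqref{gis} this yields $\mathfrak{g}_0^{\xi_0} = \mathfrak{g}_0^{\xi'}$. Theorem \ref{nor} then provides a constant loop $\gamma \in \Omega_{\mathrm{alg}}\mathrm{G}$ with $\gamma\Phi(M\setminus D) \subseteq U_{\xi'}(\mathrm{G})$, completing the proof. The conceptual punchline is that this $I$-dependence of $\mathfrak{g}_0^\eta$ on $\mathfrak{C}_I$ is precisely what makes the notion of $I$-canonical element compatible with the normalization theorem.
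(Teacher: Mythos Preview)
Your proposal is correct and follows essentially the same route as the paper's proof: invoke Theorem~\ref{usd} to obtain $\xi_0$ and $D$, define $I$ from the support of $\xi_0$, find an $I$-canonical $\xi'\succeq\xi_0$, check $\mathfrak{g}_0^{\xi_0}=\mathfrak{g}_0^{\xi'}$ via \eqref{gis}, and apply Theorem~\ref{nor}. Your version is slightly more explicit---you replace the paper's appeal to Zorn's lemma by the observation that $S$ is finite (which is more elementary and entirely adequate here), and you spell out the root-theoretic reason why $\mathfrak{g}_0^\eta$ depends only on $I$ for $\eta\in\mathfrak{C}_I$, which the paper leaves as an immediate consequence of \eqref{gis}.
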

\begin{proof}
Take $D\subset M$ and $\xi\in \mathfrak{I}'(\mathrm{G})$ in the conditions of Theorem \ref{usd}. Write $\xi=\sum_{i=1}^kn_iH_i$, with $n_i\geq 0$, and set $I=\{i| n_i>0\}$. By Zorn's lemma,  there certainly exists a $I$-canonical element $\xi'$ such that $\xi\preceq \xi'$. On the other hand, from \eqref{gis} we see that $\g_0^\xi=\g_0^{\xi'}$. Hence the result follows from Theorem \ref{nor}. \end{proof}

\begin{thm}
\emph{  Let  $\Phi:M \to \Omega^{\mathcal{I}}_{\rm{alg}}\mathrm{G}$ be an extended solution with values in $U_{\xi}^{\mathcal{I}}(\mathrm{G})$, for some $\xi\in\mathfrak{I}'(\mathrm{G})$, off a discrete set $D$. There exist a constant loop $\gamma\in \Omega^{\mathcal{I}}_{\rm{alg}}\mathrm{G}$ and a  symmetric canonical element $\xi'$  such that
$\gamma\Phi(M\setminus D)\subseteq U^{\mathcal{I}}_{\xi'}(\mathrm{G})$ and $N_\xi=N_{\xi'}$.}
\end{thm}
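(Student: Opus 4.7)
The plan is to mirror the proof of the preceding theorem, but replacing Theorem \ref{nor} by its symmetric analogue Theorem \ref{nor2} and invoking Proposition \ref{proposition} to keep track of the underlying symmetric space.

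First I would consider the set
\[
S=\{\zeta\in\mathfrak{I}'(\mathrm{G})\,|\,\xi\preceq_{\mathcal{I}}\zeta\}.
\]
Writing $\xi=\sum n_i H_i$, every $\zeta=\sum n'_i H_i\in S$ satisfies in particular $\xi\preceq\zeta$, so by the characterization of $\preceq$ recalled just before the definition of $I$-canonical element we have $0\leq n'_i\leq n_i$ for each $i$. Thus $S$ is finite and nonempty (it contains $\xi$), and therefore has a maximal element $\xi'$ with respect to $\preceq_{\mathcal{I}}$.

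Next, I would verify that $\xi'$ is actually a symmetric canonical element, i.e.\ a maximal element of the whole poset $(\mathfrak{I}'(\mathrm{G}),\preceq_{\mathcal{I}})$. Suppose $\xi'\preceq_{\mathcal{I}}\zeta$. Using transitivity of $\preceq$ and the fact that $\mathfrak{I}^{2}(\mathrm{G})$ is an additive subgroup of $\mathfrak{t}$, the two relations $\xi-\xi'\in\mathfrak{I}^2(\mathrm{G})$ and $\xi'-\zeta\in\mathfrak{I}^2(\mathrm{G})$ combine to give $\xi-\zeta\in\mathfrak{I}^2(\mathrm{G})$, and similarly $\xi\preceq\zeta$, so $\xi\preceq_{\mathcal{I}}\zeta$. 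Hence $\zeta\in S$, and maximality of $\xi'$ inside $S$ forces $\zeta=\xi'$.

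With $\xi\preceq_{\mathcal{I}}\xi'$ in hand, Theorem \ref{nor2} directly produces a constant loop $\gamma\in\Omega^{\mathcal{I}}_{\mathrm{alg}}\mathrm{G}$ such that $\gamma\Phi(M\setminus D)\subseteq U^{\mathcal{I}}_{\xi'}(\mathrm{G})$, while Proposition \ref{proposition} yields $N_\xi=N_{\xi'}$. There is no substantive obstacle: the argument is a short order-theoretic packaging of two already-established $\mathcal{I}$-equivariant results. The only mildly delicate point is checking that maximality inside the finite set $S$ coincides with maximality in all of $\mathfrak{I}'(\mathrm{G})$, which is exactly what transitivity of $\preceq_{\mathcal{I}}$ (via the subgroup property of $\mathfrak{I}^2(\mathrm{G})$) provides.
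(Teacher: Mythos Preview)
Your proof is correct and follows essentially the same route as the paper: find a $\preceq_{\mathcal{I}}$-maximal element $\xi'$ above $\xi$, then invoke Theorem \ref{nor2} and Proposition \ref{proposition}. The only difference is that the paper appeals to Zorn's lemma for the existence of $\xi'$, whereas you give a cleaner finiteness argument (bounding the coefficients $0\le n'_i\le n_i$) together with an explicit check of transitivity of $\preceq_{\mathcal{I}}$ via the subgroup property of $\mathfrak{I}^2(\mathrm{G})$; this is a small but genuine improvement in presentation.
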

\begin{proof}
 By Zorn's lemma,  there certainly exists a  symmetric canonical element $\xi'$ such that $\xi\preceq_\mathcal{I} \xi'$. The result follows from Proposition \ref{proposition} and Theorem \ref{nor2}. \end{proof}

\subsection{Frenet frame data for extended solutions into $\Omega_{\mathrm{alg}}\mathrm{U}(n)$}\label{construction}

Given a finite collection $\{s_j\}$ of meromorphic sections of the trivial bundle $\underline{\C}^n=M\times \C^n$, we obtain an holomorphic bundle away from a discrete subset of $M$, and we can fill in holes to extend it  to subbundle $E$ over $M$ of $\underline{\C}^n$. In this case, we denote $E=\mathrm{Span}\{s_j\}$. Reciprocally, any holomorphic subbundle $E$ of $\underline{\C}^n$ has a global meromorphic frame $\{s_1,\ldots, s_k\}$, with $k=\mathrm{rank}\, E$, as explained in  \cite{svensson_wood_2010}. For $i>0$, the \emph{$(i)$-th osculating bundle} of $E$ is the subbundle $E^{(i)}$ of $\underline{\C}^n$ spanned by the local holomorphic sections of $E$ and their derivatives up to $i$. We also define the \emph{$(-i)$-th osculating bundle} of $E$ as the subbundle $E^{(-i)}$ of $\underline{\C}^n$ spanned by the local holomorphic sections  of $E$ whose derivatives up to $i$ are also local sections of $E$.
 Let  $g_E=\mathrm{rank}\,E^{(1)}-\mathrm{rank}\,E$  and $r_E$ be the remainder of the positive integer division of $\mathrm{rank}\,E$ by $g_E$: $\mathrm{rank}\,E=q_Eg_E+r_E$.

As Guest \cite{guest_2002} has observed, any smooth map $W:M\to \mathrm{Gr}^n$  corresponding to an extended solution  $\Phi:M\to \Omega_{r}^k\mathrm{U}(n)$ is \emph{generated} by a certain holomorphic subbundle $X$, a \emph{Frenet frame} of $\Phi$,
of the trivial bundle $ M\times \lambda^{r}H_+\big/\lambda^kH_+$ by setting
\begin{equation}\label{frenetframe}
W=X+\lambda X^{(1)}+\ldots+\lambda^{k-r-1}X^{(k-r-1)}+\lambda^{k}H_+.
\end{equation}
Hence any extended solution $\Phi:M\to \Omega_{\mathrm{alg}}\mathrm{U}(n)$ can be obtained by applying a finite number of algebraic operations on sets of meromorphic functions on $M$, since $X$ can be chosen arbitrarily. In \cite{ferreira_simoes_wood_2010,svensson_wood_2010} the authors established explicit algebraic formulae relating  Frenet frames $X$ with different classes of uniton factorizations of harmonic maps. Next we will  give a description of the Frenet frames associated to  extended solutions with values in a fixed piece $U_\xi(\mathrm{U}(n))$ of the Bruhat decomposition of $\Omega_{\mathrm{alg}}\mathrm{U}(n)$ and we  establish a pure algebraic method to obtain all $\mathrm{S}^1$-invariant extended solutions with values in a fixed $\Omega_\xi$.

Choose a local complex coordinate $z$ and a local section $s$ of $E$. Differentiating $\pi^\perp_E(s)=0$, where  $\pi^\perp_E$ is the orthogonal projection onto $E^\perp$, we get $\pi^\perp_E(s_z)=-(\pi_E^\perp)_z(s)$. Hence the association $s \mapsto \pi^\perp_E(s_z)$ defines a local vector bundle morphism $\mathcal{A}_E:E\to E^\perp$, which, following \cite{burstall_wood_1986}, we call the \emph{$\partial'$-second fundamental form}  of $E$ in $\C^n$, whose kernel and image do not depend on the choice of the local coordinate $z$. It follows from the linearity of the $\partial'$-second fundamental form that:

 \begin{lem}\label{E}
   \emph{Let $E$ be a holomorphic vector subbundle of $\underline{\C}^n$.
   \begin{enumerate}\item[a)] For all $i\geq 1$, $E^{(-i)}=\ker \mathcal{A}_{E^{(-i+1)}} $ is locally spanned by those sections $s$ of $E$ solving the following system of algebraic linear equations: $(\pi_{E^{(-j)}}^\perp)_z(s)=0$ for all $j=0,\ldots, i-1$;
     \item[b)]  $ g_{E^{(i)}}\leq g_E$ and   $\mathrm{rank}\, E^{(i)}\leq \mathrm{rank}\, E+ig_E$ for all $i\geq 1$ (the equalities hold for $i=1$);
     \item[c)] $g_{E^{(-i)}}\leq g_E$ and $\mathrm{rank}\, E^{(-i)}\geq \mathrm{rank}\, E-ig_E$ for all $i\geq 1$(the equalities hold for $i=1$);
      \item[d)] For each $g\geq g_E$, there exists a super-horizontal flag of holomorphic subbundles
     \begin{equation}\label{E_q}E_{-q}\subsetneq E_{-q+1}\subsetneq \ldots \subsetneq E_{-1}\subsetneq E_0=E,\end{equation}  such that $\mathrm{rank}\, E_{-i}=\mathrm{rank}\, E -ig$, where the integer $q\leq q_E$ is the quotient of the positive integer division of $\mathrm{rank}\, E$ by $g$: $\mathrm{rank}\, E=qg+r$.
   \end{enumerate}}
 \end{lem}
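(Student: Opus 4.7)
The plan is to treat the four parts in the order they are listed, with (d) being the real content and (a)--(c) being preparatory.

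For (a), the proof is by induction on $i$. The identity $E^{(-i)} = \ker \mathcal{A}_{E^{(-i+1)}}$ follows directly from unwinding definitions: a section $s \in E^{(-i+1)}$ satisfies $s_z \in E^{(-i+1)}$ if and only if $s, s_z, \ldots, s^{(i)}$ all lie in $E$, which is precisely the characterization of $E^{(-i)}$. For the algebraic description, the identity $\pi^\perp_F(s_z) = -(\pi^\perp_F)_z(s)$, valid for any section $s$ of $F$, applied inductively with $F = E, E^{(-1)}, \ldots, E^{(-i+1)}$, shows that if $s \in E$ and $(\pi^\perp_{E^{(-j)}})_z(s) = 0$ for $j = 0, \ldots, i-1$, then $s \in E^{(-i)}$; the converse is automatic.

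For (b), $\partial_z$ induces a well-defined and manifestly surjective bundle morphism $\delta_i : E^{(i)}/E^{(i-1)} \to E^{(i+1)}/E^{(i)}$ sending $[s] \mapsto [s_z]$. Hence the ranks $g_{E^{(i)}}$ are weakly decreasing in $i$, giving both $g_{E^{(i)}} \leq g_E$ and, by summing, $\mathrm{rank}\, E^{(i)} \leq \mathrm{rank}\, E + i g_E$. For (c), the rank-nullity theorem applied to $\mathcal{A}_E$ yields $\mathrm{rank}\, E^{(-1)} = \mathrm{rank}\, E - g_E$, because $\mathrm{Im}\,\mathcal{A}_E = \pi^\perp_E(E^{(1)})$ has rank $g_E$. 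Moreover $(E^{(-1)})^{(1)} \subseteq E$ since derivatives of sections of $E^{(-1)}$ lie in $E$ by construction, so $g_{E^{(-1)}} \leq \mathrm{rank}\, E - \mathrm{rank}\, E^{(-1)} = g_E$. Iterating the relation $E^{(-i)} = (E^{(-i+1)})^{(-1)}$ gives the statement for arbitrary $i$.

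Part (d) is the main point. I would build the flag from the top down, taking $E_0 = E$. Given $E_{-i}$ with the prescribed rank and with $(E_{-i})_z \subseteq E_{-i+1}$ already ensured, the crucial observation is that super-horizontality forces $E_{-i}^{(1)} \subseteq E_{-i+1}$ and hence $g_{E_{-i}} \leq \mathrm{rank}\, E_{-i+1} - \mathrm{rank}\, E_{-i} = g$ (for $i = 0$ this is precisely the hypothesis $g \geq g_E$). By (c) applied to $E_{-i}$ one then has $\mathrm{rank}\, E_{-i}^{(-1)} \geq \mathrm{rank}\, E_{-i} - g$, leaving enough room to choose a holomorphic subbundle $E_{-i-1} \subseteq E_{-i}^{(-1)}$ of rank $\mathrm{rank}\, E_{-i} - g$, completed across a discrete subset of $M$ by the ``filling-in-the-holes'' procedure. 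By the very definition of the $(-1)$-osculating bundle, any such $E_{-i-1}$ automatically satisfies $(E_{-i-1})_z \subseteq E_{-i}$, so super-horizontality propagates, and the induction terminates at $i = q$ with $\mathrm{rank}\, E_{-q} = r$. The main obstacle is ensuring that the bound $g_{E_{-i}} \leq g$ persists down the flag; this is exactly what is secured by the hypothesis $g \geq g_E$ at the first step and by super-horizontality itself at subsequent steps, while a minor technical point is that $\mathrm{rank}\, E_{-i}^{(-1)}$ may jump at isolated points, so one must invoke the extension of a generically defined holomorphic subbundle across a discrete set to obtain a genuine holomorphic subbundle of $\underline{\C}^n$ over all of $M$.
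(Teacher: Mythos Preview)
Your proposal is correct and follows essentially the same route as the paper: parts (a)--(c) are handled via the identity $\pi^\perp_F(s_z)=-(\pi^\perp_F)_z(s)$ and rank--nullity for $\mathcal{A}$, and part (d) is the same top-down recursion, choosing $E_{-i-1}\subseteq E_{-i}^{(-1)}$ of the prescribed rank and using $E_{-i}^{(1)}\subseteq E_{-i+1}$ to propagate the bound $g_{E_{-i}}\leq g$. Your write-up is in fact more explicit than the paper's (which only sketches (c) and (d)), and your remark about filling in holes across a discrete set is a useful addition.
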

\begin{proof}
For example, since $E^{(-i-1)}=\ker \mathcal{A}_{E^{(-i)}}$, we have, for all $i\geq 0$,
\begin{equation*}\label{n1}
g_{E^{(-i)}}=\mathrm{rank}\,\mathrm{im}\mathcal{A}_{E^{(-i)}}=\mathrm{rank}\,\mathrm{coim}\mathcal{A}_{E^{(-i)}}= \mathrm{rank}E^{(-i)}- \mathrm{rank}E^{(-i-1)}.
\end{equation*}
On the other hand, since
   the image of $\mathcal{A}_{E^{(-i-1)}}$ is contained in $ E^{(-i)}\ominus E^{(-i-1)}$, for all $i\geq 0$, we also have
$g_{E^{(-i-1)}}\leq  \mathrm{rank}E^{(-i)}- \mathrm{rank}E^{(-i-1)}.$
  Hence, for all $i\geq  0$,
$g_{E^{(-i)}}\leq g_E.$

To construct a flag \eqref{E_q}, start by taking an arbitrary holomorphic subbundle $E_{-1}\subseteq E^{(-1)}$ with $\mathrm{rank}\, E_{-1}=\mathrm{rank}\, E -g\leq \mathrm{rank}\, E -g_E=\mathrm{rank}\,E^{(-1)}$. Clearly,
  \begin{equation}\label{ge}
  g_{E_{-1}}=\mathrm{rank}\,E_{-1}^{(1)}-\mathrm{rank}\,E_{-1}\leq \mathrm{rank}\, E -\mathrm{rank}\,E_{-1}= g.
  \end{equation}
  Hence $$\mathrm{rank}\, E_{-1}^{(-1)}= \mathrm{rank}\,E_{-1}-g_{E_{-1}}\geq \mathrm{rank}\,E-2g,$$
   and we see that there exists a holomorphic subbundle $E_{-2}$ of $E_{-1}^{(-1)}$ with $\mathrm{rank}\,E_{-2} =\mathrm{rank}\,E-2g$. Proceeding recursively we find after $q$ steps a super-horizontal flag of holomorphic subbundles \eqref{E_q}.
\end{proof}
The following construction is fundamental for our purposes:

 \begin{lem}\label{algc}
 \emph{Let  $T\subset E$ be two holomorphic subbundles of $\underline{\C}^n$.
 Fix a positive integer   $g$,  with $g\geq \max\{g_T,g_E\}$, and assume that, for some $i,j\geq 0$, we have $T^{(j)}\subset E^{(-i)}$. Given an integer $d$ with $\mathrm{rank}\,T^{(j)}< d<\mathrm{rank}\,E^{(-i)}$,  any holomorphic subbundle $F$ satisfying $T^{(j)}\subset F\subset E^{(-i)}$,
$\mathrm{rank}\,F=d$, and $g_F\leq g$, arises as follows:
 \begin{enumerate}
\item[a)]  Set
 $k_0  =\max\{k\,|\, d-kg> \mathrm{rank}\,T^{(j-k)}\}$ and  $r_0 =d-k_0g -\mathrm{rank}\,T^{(j-k_0)}$.
Choose $r_0$ linearly independent meromorphic sections $s_1,\ldots ,s_{r_0}$ of
 $E^{(-i-k_0)}$ so that the holomorphic vector bundle
 \begin{equation}\label{fk0} F_{-k_0}=T^{(j-k_0)}+\mathrm{Span}\{s_1,\ldots ,s_{r_0}\}\end{equation} has rank $d-k_0g$. Independently of the choice of these meromorphic sections, we have
 $g_{F_{-k_0}}\leq g$.
 \item[b)] Choose $r_1=d-(k_0-1)g-\mathrm{rank}F_{-k_0}^{(1)}$ meromorphic sections $s_{r_0+1},\ldots ,s_{r_0+r_1}$  of $E^{(-i-k_0+1)}$ so that
 the holomorphic vector subbundle
 \begin{equation*}\label{fk01}
 F_{-k_0+1}=F_{-k_0}^{(1)}+\mathrm{Span}\{s_{r_0+1},\ldots ,s_{r_0+r_1}\}
 \end{equation*} has rank $d-(k_0-1)g$. We have $g_{F_{-k_0+1}}\leq g$.
\item[c)] Repeat this procedure $k_0$ times to find a super-horizontal flag of holomorphic subbundles $F_{-k_0}\subsetneq \ldots\subsetneq  F_{-1}\subsetneq F_0=F$, with
\begin{equation}\label{k+l}
F_{-k_0+l}=F_{-k_0+l-1}^{(1)}+\mathrm{Span}\{s_{r_0+\ldots+r_{l-1}+1},\ldots ,s_{r_0+\ldots+r_{l-1}+r_l}\},\end{equation}
 $r_l=d-(k_0-l)g-\mathrm{rank}F_{-k_0+l-1}^{(1)}$ and $\mathrm{rank}\,F_{-k_0+l}=d-(k_0-l)g.$
  \end{enumerate}}
 \end{lem}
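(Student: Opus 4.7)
The plan is to prove the lemma in two directions: first, that the recursive construction (a)--(c) yields a holomorphic subbundle $F$ with the stated properties, and second, that any such $F$ arises from this construction. Both halves will proceed inductively along the flag $F_{-k_0} \subset F_{-k_0+1} \subset \cdots \subset F_0 = F$, maintaining the joint invariants $T^{(j-k_0+l)} \subset F_{-k_0+l} \subset E^{(-i-k_0+l)}$, $\mathrm{rank}\, F_{-k_0+l} = d - (k_0-l)g$, and $g_{F_{-k_0+l}} \leq g$.

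For the forward direction, I would verify feasibility and preservation of the invariants at each step. At the base ($l=0$), the strict inequality $r_0 > 0$ is precisely the defining property of $k_0$; the containments $T^{(j-k_0)} \subset F_{-k_0} \subset E^{(-i-k_0)}$ hold by construction; and the bound $g_{F_{-k_0}} \leq g$ is established independently by exploiting $F_{-k_0} \subset E^{(-i-k_0)}$, the bound $g_{E^{(-i-k_0)}} \leq g$, and the ensuing control on the derivative image modulo $F_{-k_0}$ via the ambient rank jump $\mathrm{rank}\, E^{(-i-k_0+1)} - \mathrm{rank}\, E^{(-i-k_0)} \leq g$. For the inductive step, $r_l \geq 0$ follows from the inductive bound $g_{F_{-k_0+l-1}} \leq g$, which gives $\mathrm{rank}\, F_{-k_0+l-1}^{(1)} \leq d-(k_0-l)g$; the osculating containment is preserved by iterating $(E^{(-i-k_0+l-1)})^{(1)} \subset E^{(-i-k_0+l)}$; and the bound $g_{F_{-k_0+l}} \leq g$ follows cleanly from \eqref{k+l}: the defining equation forces $F_{-k_0+l-1}^{(1)} \subset F_{-k_0+l}$, i.e., $F_{-k_0+l-1} \subset F_{-k_0+l}^{(-1)}$, and the identity $g_X = \mathrm{rank}\, X - \mathrm{rank}\, X^{(-1)}$ (as used in the proof of Lemma \ref{E}) yields $g_{F_{-k_0+l}} \leq \mathrm{rank}\, F_{-k_0+l} - \mathrm{rank}\, F_{-k_0+l-1} = g$.

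For the reverse direction, given any $F$ with the stated properties, I would construct the flag by descending induction from $F_0 = F$. At the passage $F_{-l} \rightsquigarrow F_{-l-1}$, I would select $F_{-l-1}$ as any holomorphic subbundle of $F_{-l}^{(-1)}$ containing $T^{(j-l-1)}$ and having rank $d-(l+1)g$. Feasibility follows from (i) $(T^{(j-l-1)})^{(1)} = T^{(j-l)} \subset F_{-l}$, hence $T^{(j-l-1)} \subset F_{-l}^{(-1)}$; (ii) $d-(l+1)g > \mathrm{rank}\, T^{(j-l-1)}$ for $l+1 \leq k_0$ by definition of $k_0$; and (iii) $\mathrm{rank}\, F_{-l}^{(-1)} \geq \mathrm{rank}\, F_{-l} - g_{F_{-l}} \geq d-(l+1)g$. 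The containment $F_{-l-1} \subset E^{(-i-l-1)}$ cascades from $F_{-l} \subset E^{(-i-l)}$ by iterating the implication ``$s, s_z \in E^{(-i-l)} \Rightarrow s \in E^{(-i-l-1)}$''. Once the flag is assembled, the meromorphic sections of \eqref{fk0} and \eqref{k+l} are read off as frames completing $T^{(j-k_0)}$ to $F_{-k_0}$ at the bottom and $F_{-k_0+l-1}^{(1)}$ to $F_{-k_0+l}$ at each higher stage.

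The principal obstacle I anticipate is the base-case bound $g_{F_{-k_0}} \leq g$ of the forward direction: unlike the subsequent steps, there is no super-horizontal predecessor available to invoke, so the argument must rely directly on the ambient envelope $E^{(-i-k_0)}$ and its own rank-jump $g_{E^{(-i-k_0)}} \leq g$, combined with a careful analysis of $F_{-k_0}^{(1)} \cap E^{(-i-k_0)}$ modulo $F_{-k_0}$. A secondary technical point is the bookkeeping of osculating indices along the flag and the repeated use of the equivalence ``$A^{(1)} \subset B \iff A \subset B^{(-1)}$'', which underlies every passage between adjacent levels.
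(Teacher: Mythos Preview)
Your inductive-step argument and your reverse direction are sound and match the paper's approach. The genuine gap is in your base case: the bound $g_{F_{-k_0}}\leq g$ cannot be obtained from the ambient envelope $E^{(-i-k_0)}$ alone, and your proposed route via the rank jump $\mathrm{rank}\,E^{(-i-k_0+1)}-\mathrm{rank}\,E^{(-i-k_0)}\leq g$ fails. To see this, take $E=\underline{\C}^n$ constant, so that $E^{(-m)}=\underline{\C}^n$ for every $m$, $g_{E^{(-m)}}=0$, and your ``ambient rank jump'' is identically zero; yet $F_{-k_0}$ can be any holomorphic subbundle of the correct rank inside $\underline{\C}^n$, and $g_{F_{-k_0}}$ is in general positive. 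Your ``careful analysis of $F_{-k_0}^{(1)}\cap E^{(-i-k_0)}$ modulo $F_{-k_0}$'' then reduces to $F_{-k_0}^{(1)}$ modulo $F_{-k_0}$, which is exactly $g_{F_{-k_0}}$ and tells you nothing.

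The missing idea is that the base-case bound comes from $T$ together with the \emph{maximality} of $k_0$, not from $E$. You only used one half of the definition of $k_0$ (that $r_0>0$); the other half gives $d-(k_0+1)g\leq \mathrm{rank}\,T^{(j-k_0-1)}$. Since $F_{-k_0}=T^{(j-k_0)}+\mathrm{Span}\{s_1,\ldots,s_{r_0}\}$, one has
\[
g_{F_{-k_0}}\leq g_{T^{(j-k_0)}}+r_0
= g_{T^{(j-k_0)}}+d-k_0g-\mathrm{rank}\,T^{(j-k_0)}
\leq g_{T^{(j-k_0)}}+g-\big(\mathrm{rank}\,T^{(j-k_0)}-\mathrm{rank}\,T^{(j-k_0-1)}\big),
\]
and the last expression is $\leq g$ because the bracketed rank difference dominates $g_{T^{(j-k_0)}}$. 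This is how the paper proceeds; once you replace your $E$-based sketch with this $T$-based computation, the rest of your argument goes through.
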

 \begin{proof}

Since $d< \mathrm{rank}\,E^{(-i)}$ and $g_{E^{(-i)}}\leq g_E\leq g$,  by Lemma \ref{E}
   inequalities  $$d-kg< \mathrm{rank}\,E^{(-i)}-kg_{E^{(-i)}}\leq \mathrm{rank}\,E^{(-i-k)}$$ hold  for each $k\geq 0$. Hence
we can always take $r_0\geq 0$  linearly independent meromorphic sections of $E^{(-i-k_0)}$ so that ${F_{-k_0}}$ defined by \eqref{fk0}   has rank $d-k_0g$.   We have to check now that
$g_{F_{-k_0}}\leq g$. By definition of $k_0$ we have
$d-(k_0+1)g\leq \mathrm{rank}\,T^{(j-k_0-1)}.$
Then,
 \begin{align*}\label{desg}g_{F_{-k_0}}&\leq  g_{T^{(j-k_0)}}+r_0 = g_{T^{(j-k_0)}} +d-k_0g-\mathrm{rank}T^{(j-k_0)} \\  &\leq g_{T^{(j-k_0)}}+g-(\mathrm{rank}T^{(j-k_0)}-\mathrm{rank}T^{(j-k_0-1)})= g_{T^{(j-k_0)}}+g-g_{T^{(j-k_0)}}=g.\end{align*}

Since $F_{-k_0}\subseteq \ker \mathcal{A}_{F_{-k_0+1}}$, then $g_{F_{-k_0+1}}\leq \mathrm{rank}\,{F_{-k_0+1}}-\mathrm{rank}\,{F_{-k_0}}=g$.
On the other hand, it is clear that $r_1\geq 0$. Hence the construction of item b) is possible and we can  proceed recursively until find a super-horizontal flag of holomorphic subbundles $F_{-k_0}\subsetneq \ldots\subsetneq  F_{-1}\subsetneq F_0=F$, with $F_{-k_0+l}$ given by \eqref{k+l}, where $F$ is certainly in the required conditions.

   Reciprocally, any $F$ as required  certainly arises in this way. In fact,  by Lemma \ref{E} there always exists  a super-horizontal flag of holomorphic subbundles $F_{-q}\subsetneq\ldots\subsetneq F_{-k_0}\subsetneq \ldots\subsetneq  F_{-1}\subsetneq F_0=F$, with  $k_0  =\max\{k\,|\, d-kg> \mathrm{rank}\,T^{(j-k)}\}$. We can choose such sequence so that $T^{(j-k_0)}\subsetneq F_{-k_0}$.

\end{proof}

Now we are in conditions to establish an algorithm to obtain all  $\mathrm{S}^1$-invariant extended solutions with values in a given $\Omega_\xi$.

\begin{thm}
\emph{Fix $\xi\in \mathfrak{I}(\mathrm{U}(n))$ and consider the corresponding flag \eqref{flag1}. Set $d_i=\dim A^\xi_i$ and $h_i=d_{i+1}-d_{i}$. Any super-horizontal flag of holomorphic vector subbundles
\begin{equation}\label{flag2}
\{0\}=A_{-r-1}\subsetneq A_{-r} \subseteq \ldots \subseteq A_{k-1} \subsetneq A_k=\underline{\C}^n
 \end{equation}
 with $\mathrm{rank}\, A_i=d_i$  arises as follows:
 \begin{enumerate}
 \item[a)] Set $l=\min\{h_i\,|\, i=-r-1,\ldots,k-1\}$ and $m=\max\{i\,|\, l=h_i\}$. Apply  Lemma \ref{algc} (with $T=\{0\}$, $E= \underline{\C}^n$, $d= d_m$ and $g=l$) to find $A_m$.
\item[b)] Set
$l_1=\min\{h_i\,|\,  -r-1 \leq i <m \}$, $m_1=\max\{i\,|\, l_1=h_i, -r-1 \leq i <m\}$. Apply  Lemma \ref{algc} (with $T=\{0\}$, $E= A_m$, $d= d_{m_1}$ and $g=l_1$) to find $A_{m_1}\subseteq {A}_m^{(m_1-m)}.$
 \item[c)] Set $l_{\hat{1}}=\min\{h_i\,|\,  m<i\leq k-1 \}$ and $m_{\hat{1}}=\max\{i\,|\, l_{\hat{1}}=h_i, m<i\leq k-1\}$, and apply  Lemma \ref{algc}
(with  $T= A_m$, $E= \underline{\C}^n$, $d= d_{m_{\hat 1}}$ and $g=l_{\hat {1}}$)
 to find $A_{m_{\hat{1}}}\supseteq A_m^{(m_{\hat{1}}-m)}.$
 \item[d)] Proceed recursively until  obtain  a flag of the form \eqref{flag2}.
 \end{enumerate}}
\end{thm}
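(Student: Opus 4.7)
The plan is to reduce each step of the algorithm to a direct application of Lemma \ref{algc}, and then close by induction on the length of the flag.

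First I would record the basic consequences of super-horizontality: the condition $(A_i)_z\subseteq A_{i+1}$ iterates to $A_i^{(j)}\subseteq A_{i+j}$ for $j\geq 0$, and dually $A_{i-j}\subseteq A_i^{(-j)}$, because sections of $A_{i-j}$ have their first $j$ derivatives successively in $A_{i-j+1},\ldots,A_i$. In particular $g_{A_i}\leq h_i$. These inclusions supply exactly the ambient containments $T^{(j)}\subset F\subset E^{(-i)}$ that Lemma \ref{algc} requires. They also imply $g_{A_m}\leq l$ at the pivot index, which is the key numerical inequality for what follows.

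For step (a), with $l=\min_i h_i$ and $m$ the largest index realizing it, the bundle $A_m$ has rank $d_m$ and $g_{A_m}\leq l$, matching the hypotheses of Lemma \ref{algc} with $T=\{0\}$, $E=\underline{\C}^n$, $i=j=0$, $d=d_m$, $g=l$. For step (b), the preliminary inclusions give $A_{m_1}\subseteq A_m^{(m_1-m)}$ and $g_{A_{m_1}}\leq l_1$; since $g_{A_m}\leq l\leq l_1$, the compatibility $g\geq\max\{g_T,g_E\}$ is satisfied, so Lemma \ref{algc} may be applied with $(T,E,i,j,d,g)=(\{0\},A_m,m-m_1,0,d_{m_1},l_1)$. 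Step (c) is the mirror-image argument: $A_m^{(m_{\hat 1}-m)}\subseteq A_{m_{\hat 1}}$ follows from super-horizontality, and Lemma \ref{algc} is invoked with $(T,E,i,j,d,g)=(A_m,\underline{\C}^n,0,m_{\hat 1}-m,d_{m_{\hat 1}},l_{\hat 1})$.

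After these three bundles are in place, the original flag splits into at most four shorter super-horizontal sub-flags, each framed between two already-constructed bundles, to which the same procedure applies recursively; induction on the flag length closes the argument. For the converse direction, one verifies routinely that the output of the algorithm is a super-horizontal flag with the prescribed dimensions, since each Lemma \ref{algc} call delivers $F$ with $T^{(j)}\subset F\subset E^{(-i)}$, $\mathrm{rank}\,F=d_i$, and $g_F\leq g$, which are the precise ingredients needed to assemble the flag \eqref{flag2}. The main obstacle I anticipate is the bookkeeping of the updated parameters $T$, $E$, $i$, $j$, $g$ through the recursion and the verification that the compatibility $g\geq\max\{g_T,g_E\}$ propagates at every stage; this follows from the monotonicity principle that restricting to a sub-range of indices can only raise $\min h_i$, so the bound $g$ at each recursive call is never below the $g_F$ produced at a previous call.
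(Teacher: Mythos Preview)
Your proposal is correct and matches the paper's intent. The paper does not supply a separate proof for this theorem: it is stated as a constructive algorithm immediately after Lemma~\ref{algc}, with the implicit understanding that each step is a direct instance of that lemma. Your write-up makes this explicit by checking, at each stage, the containments $T^{(j)}\subset F\subset E^{(-i)}$ (via the super-horizontality consequence $A_i^{(j)}\subseteq A_{i+j}$), the growth bound $g_F\leq g$ (via $g_{A_i}\leq h_i$), and the compatibility $g\geq\max\{g_T,g_E\}$ (via the monotonicity of $\min h_i$ under restriction to sub-ranges), which is precisely the verification the paper leaves to the reader. The only minor point you might add for completeness is the degenerate boundary case where the strict inequalities $\mathrm{rank}\,T^{(j)}<d<\mathrm{rank}\,E^{(-i)}$ collapse to equalities, in which case $F$ is forced to equal $T^{(j)}$ or $E^{(-i)}$ and no invocation of Lemma~\ref{algc} is needed.
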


\begin{rem}
  In \cite{burstall_guest_1997}, the authors introduce a method to obtain super-horizontal flags of holomorphic subspaces associated to a given element $\xi\in\mathfrak{I}'(G)$. However, their method involves integration of meromorphic functions.
\end{rem}

Finally, take a super-horizontal flag of holomorphic vector subbundles \eqref{flag2} and the corresponding $\mathrm{S}^1$-invariant extended solution $W_A$.
Take a meromorphic frame $s_1,\ldots,s_{d_{k-1}}$ of $A_{k-1}$ such that, for each $i\in \{-r,\ldots,k-1\}$, $s_1,\ldots,s_{d_i}$ is a meromorphic frame of $A_i$ and $s_1,\ldots,s_{d_i},s_{d_i+1},\ldots,s_{d_i+g_i}$ is a meromorphic frame of  $A_i^{(1)}$. The extended solution
$W$, with values in $U_\xi(\mathrm{U}(n))$   and $u_\xi(W)=W_A$, have Frenet frames of the form
\begin{align}\label{frenetframe}
\nonumber X&=  \mathrm{Span}\{s_1\lambda^{-r}+ w_1\lambda^{-r+1},\ldots, s_{d_{-r}}\lambda^{-r}+ w_{d_{-r}}\lambda^{-r+1}\}\\&+\sum_{i=-r}^{k-2}\mathrm{Span}\{s_{d_i+g_i+1}\lambda^{i+1}+w_{d_i+g_i+1}\lambda^{i+2},\ldots,s_{d_{i+1}}\lambda^{i+1}+w_{d_{i+1}}\lambda^{i+2} \};
\end{align}
where, for each $j\in\{1,\ldots,d_{k-1}\}$, $w_j$ is a meromorphic section of
 $M\times H_+^n/\lambda^{r+k}H_+^n$. However, in the general case, these meromorphic sections $w_j$ can not be chosen arbitrarily.  For example, if $s_1$ is a constant section, $w'_1\lambda^{-r+2}$ becomes a section of $W$. So we have to impose that $p_{0}(w'_1)$, whith $p_0$ the projection defined by \eqref{pi}, has no orthogonal component onto $A^\perp_{-r+2}$.  In sections \ref{3s} and \ref{45} we shall discuss in  detail some examples.


\section{Extended solutions in $\Omega \mathrm{SU}(n)$}

\subsection{Grassmannian model for $\Omega \mathrm{SU}(n)$}

Consider the exterior product $\wedge$ of vectors in $\C^n$
and extend it to  $H^n$ as follows: if $f,g\in H^n$, then $(f\wedge g)(\lambda)=f(\lambda)\wedge g(\lambda)$.
The loop group $\Omega \mathrm{U}(n)$ acts on $\wedge^nH^n$ in the natural way:
$$\gamma(f_1\wedge\ldots\wedge f_n):=\gamma f_1\wedge\ldots\wedge \gamma f_n=\det (\gamma)(f_1\wedge\ldots\wedge f_n).$$
The Grassmannian model of $\Omega \mathrm{SU}(n)$ is given by:
\begin{prop}
\emph{A subspace $W\in \mathrm{Gr}^n$ corresponds to
a loop in $\mathrm{SU}(n)$ if, and only if, it belongs to \bdm
\mathrm{Gr}(\mathrm{SU}(n))=\{W\in \mathrm{Gr}^n\,|\,
\,\wedge^n W=\wedge^n H_+^n\}.\edm}
\end{prop}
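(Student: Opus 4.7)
The plan hinges on two facts already set up in the paper: first, the multiplicative action of a loop $\gamma$ on $n$-fold wedge products of $H^n$ is scalar multiplication by the scalar loop $\det\gamma$, as stated immediately before the proposition; second, the one-dimensionality of $\wedge^n\C^n$ lets us identify $\wedge^n H^n$ with the scalar Hilbert space $L^2(S^1,\C)$ via the generator $e_1\wedge\ldots\wedge e_n$, under which $\wedge^n H_+^n$ corresponds to the scalar Hardy space $H_+=\mathrm{Span}\{\lambda^i\mid i\geq 0\}$.

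First I would take $W\in\mathrm{Gr}^n$ and, using Remark \ref{rem} together with the identification $\mathrm{Gr}^n\cong\Omega\mathrm{U}(n)$, produce the unique based loop $\gamma\in\Omega\mathrm{U}(n)$ with $W=\gamma H_+^n$ (normalizing the orthonormal frame of $W\ominus\lambda W$ on the right by a constant so that $\gamma(1)=I$). Since $\gamma$ is pointwise invertible and multiplication by the unimodular function $\det\gamma$ is a unitary operator on $L^2(S^1,\C)$, one obtains
\begin{equation*}
\wedge^n W \;=\; \det(\gamma)\cdot \wedge^n H_+^n
\end{equation*}
as closed subspaces of $\wedge^n H^n$.

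The forward direction is then immediate: if $\gamma\in\Omega\mathrm{SU}(n)$ then $\det\gamma\equiv 1$, so $\wedge^n W=\wedge^n H_+^n$. For the converse, suppose $\wedge^n W=\wedge^n H_+^n$. Then $\det\gamma\cdot H_+ =H_+$, so both $\det\gamma=\det\gamma\cdot 1$ and $(\det\gamma)^{-1}=\overline{\det\gamma}$ lie in $H_+$, i.e.\ their Fourier series have only nonnegative indices. Writing $\det\gamma(\lambda)=\sum_k c_k\lambda^k$, the first condition gives $c_k=0$ for $k<0$ and the second (applied to $\overline{\det\gamma}(\lambda)=\sum_k\overline{c_k}\lambda^{-k}$) gives $c_k=0$ for $k>0$; hence $\det\gamma$ is constant, and the base condition $\gamma(1)=I$ forces this constant to equal $1$.

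The step I expect to require the most care is pinning down the base-point convention: the frame construction of Remark \ref{rem} only produces a free loop in $\Lambda\mathrm{U}(n)$, and one must normalize on the right by a constant unitary to obtain the based representative whose determinant at $\lambda=1$ is $1$. Without this normalization the Fourier argument only concludes that $\det\gamma$ is a unimodular constant, which is weaker than $\gamma\in\Omega\mathrm{SU}(n)$. Everything else (unitarity of multiplication by $\det\gamma$, the wedge-power action, the Fourier characterization of $H_+$) is either standard or quoted from the preceding discussion.
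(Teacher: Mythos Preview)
Your proposal is correct and takes essentially the same approach as the paper. Both arguments reduce the hypothesis $\wedge^n W=\wedge^n H_+^n$ to the statement that the unimodular scalar loop $\det\gamma$ and its inverse $\overline{\det\gamma}$ lie in $H_+$, and then use the Fourier characterization together with the base-point condition $\gamma(1)=e$ to force $\det\gamma\equiv 1$; the paper carries this out via the explicit evaluation maps $\mathrm{ev}_\lambda$ on $W\ominus\lambda W$ (and a separate pass for $\gamma^{-1}$), whereas you obtain it directly from the identity $\wedge^n W=\det(\gamma)\cdot\wedge^n H_+^n$, but the underlying logic is identical.
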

\begin{proof}
If $\gamma\in \Omega \mathrm{SU}(n)$, then it is clear that $\wedge^n W=\wedge^n H_+^n$, since $\Omega \mathrm{SU}(n)$ acts trivially on the $\wedge^nH^n$.  Conversely, suppose that $\wedge^n W=\wedge^n H_+^n$. For each $\lambda\in \mathrm{S}^1$, consider the isomorphism given by
 the evaluation map
at $\lambda$, ${\rm{ev}}_\lambda:W\ominus \lambda W \to \mathbb{C}^n$.
Set $\gamma(\lambda)={\rm{ev}}_\lambda\circ {\rm{ev}}^{-1}_1$, which is a loop of $\Omega \mathrm{U}(n)$ and,  by Remark \ref{rem}, verifies  $W=\gamma  H_+^n$.
By hypothesis
$\wedge^n (W\ominus \lambda W)\subset \wedge^n H_+^n.$
Hence, ${\rm{ev}}_1^{-1}(e_1)\wedge \ldots \wedge {\rm{ev}}_1^{-1}(e_n)\in\wedge^n H_+^n$. Since
\begin{align*}
\det(\gamma)(e_1\wedge\ldots\wedge e_n)&= \gamma(e_1)\wedge\gamma(e_2)\wedge\ldots\wedge \gamma(e_n)={\rm{ev}}_\lambda\circ {\rm{ev}}_1^{-1}(e_1)\wedge \ldots \wedge {\rm{ev}}_\lambda\circ {\rm{ev}}_1^{-1}(e_n)\\&={\rm{ev}}_\lambda( {\rm{ev}}_1^{-1}(e_1)\wedge \ldots \wedge  {\rm{ev}}_1^{-1}(e_n) ) ,
\end{align*}
it follows that $\det(\gamma)$ is in $H_+^1$.

Now, since $\wedge^n\gamma  H_+^n=\wedge^n H_+^n$, we also have  $\wedge^n H_+^n=\wedge^n \gamma^{-1} H_+^n$. Hence, by the same argument as above,  $\det(\gamma)^{-1}$ is in  $H_+^1$.
On the other hand, the fact that $\gamma$ takes values in $\mathrm{U}(n)$ implies that $\det(\gamma)^{-1}=\overline{\det(\gamma)}$, which means that $\det(\gamma)^{-1}$ is also in  $H_-^1$.
This is possible if and only if $\det(\gamma)$ is constant. Since $\gamma(1)=e$, we must have $\det(\gamma)=1$.
\end{proof}

\begin{prop}
  If $\xi\in \mathfrak{I}(\mathrm{SU}(n))\subset \mathfrak{I}(\mathrm{U}(n))$, then $U_\xi(\mathrm{SU}(n))=U_\xi(\mathrm{U}(n))$.
\end{prop}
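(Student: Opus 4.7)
The inclusion $U_\xi(\SU(n)) \subseteq U_\xi(\mathrm{U}(n))$ is immediate, since $\Lambda^+_{\mathrm{alg}}\mathrm{SL}_n(\C) \subseteq \Lambda^+_{\mathrm{alg}}\mathrm{GL}_n(\C)$ and $\Omega_{\mathrm{alg}}\SU(n)\subseteq\Omega_{\mathrm{alg}}\mathrm{U}(n)$. For the reverse, take $\gamma \in U_\xi(\mathrm{U}(n))$ and write $W := \gamma H_+^n = \Psi\gamma_\xi H_+^n$ for some $\Psi \in \Lambda^+_{\mathrm{alg}}\mathrm{GL}_n(\C)$. My plan has two stages: first, show $\gamma \in \Omega_{\mathrm{alg}}\SU(n)$ via the Grassmannian criterion $\wedge^n W = \wedge^n H_+^n$ of the previous proposition; second, identify the $\SU(n)$-Bruhat cell of $W$ with the $\mathrm{U}(n)$-Bruhat cell labelled by $\xi$.

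For the first stage, I would take $n$-th exterior powers of both expressions for $W$. One finds $\wedge^n(\gamma H_+^n) = \det(\gamma)\wedge^n H_+^n$ and $\wedge^n(\Psi\gamma_\xi H_+^n) = \det(\Psi)\det(\gamma_\xi)\wedge^n H_+^n$. The hypothesis $\xi \in \mathfrak{I}(\SU(n)) \subset \mathfrak{su}(n)$ gives $\mathrm{tr}(\xi) = 0$, whence $\det\gamma_\xi(\lambda) = \exp(-\sqrt{-1}\log\lambda\cdot\mathrm{tr}(\xi)) = 1$. Under the identification $\wedge^n H^n \cong H^1$, $\wedge^n H_+^n \cong H_+^1$ used in the previous proposition, this gives the equality of shift-invariant subspaces $\det(\gamma)\,H_+^1 = \det(\Psi)\,H_+^1$. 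Now $\det(\gamma)$ is an algebraic $\mathrm{U}(1)$-valued loop based at $1$, and a Laurent polynomial of modulus one on $\mathrm{S}^1$ must be a monomial, so $\det(\gamma) = \lambda^w$ for some $w \in \mathbb{Z}$; meanwhile $\det(\Psi) = p(\lambda)$ is an honest polynomial in $\lambda$ that is nowhere zero on the closed disk $|\lambda|\le 1$ (since $\Psi$ takes values in $\mathrm{GL}_n(\C)$ there), so both $p$ and $1/p$ lie in $H^\infty$ and consequently $p\,H_+^1 = H_+^1$. Combining, $\lambda^w H_+^1 = H_+^1$, which forces $w = 0$, so $\det\gamma = 1$ and $\gamma \in \Omega_{\mathrm{alg}}\SU(n)$ by the previous proposition.

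For the second stage, I would apply the Bruhat decomposition inside $\mathrm{Gr}_{\mathrm{alg}}(\SU(n))$ to express $W = \tilde\Psi\gamma_{\xi'}H_+^n$ for some $\xi' \in \mathfrak{I}'(\SU(n)) \subseteq \mathfrak{I}'(\mathrm{U}(n))$ and $\tilde\Psi \in \Lambda^+_{\mathrm{alg}}\mathrm{SL}_n(\C) \subseteq \Lambda^+_{\mathrm{alg}}\mathrm{GL}_n(\C)$. This places $W$ in two $\mathrm{U}(n)$-Bruhat cells simultaneously, those labelled by $\xi$ and by $\xi'$. Disjointness of the Bruhat decomposition of $\mathrm{Gr}_{\mathrm{alg}}(\mathrm{U}(n))$ — readable already from the dimension sequence $\{\dim A_i\}$ of the flag \eqref{popo1}, since $\dim A_i=\dim A_i^\xi$ determines $\xi$ — forces $\xi=\xi'$. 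Hence $\gamma \in U_\xi(\SU(n))$, completing the reverse inclusion.

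The main obstacle is the first stage: one must carefully extract the scalar identity $\det(\gamma) = 1$ from the equality of ideals $\det(\gamma)H_+^1 = \det(\Psi)H_+^1$, leveraging simultaneously the polynomial character of $\det(\Psi)$ with its non-vanishing on the closed disk, and the unimodular character of $\det(\gamma)$ which restricts it to a monomial $\lambda^w$. The Bruhat bookkeeping in the second stage is then essentially formal.
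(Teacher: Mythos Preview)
Your argument is correct, and Stage~1 coincides with the paper's proof: both compute $\wedge^n W = \det(\Psi)\,\wedge^n H^n_+$ (using $\det\gamma_\xi = 1$ since $\mathrm{tr}\,\xi=0$) and show that multiplication by $\det(\Psi)$ fixes $H^1_+$, so that $\wedge^n W = \wedge^n H^n_+$ and the previous proposition gives $\gamma \in \Omega_{\mathrm{alg}}\mathrm{SU}(n)$. The paper phrases the key point as ``$\det(\Psi)$ and $\det(\Psi^{-1})$ are both polynomial in $\lambda$'' (which already forces $\det\Psi$ to be a nonzero constant), whereas you use the weaker observation that $\det\Psi$ is bounded and boundedly invertible on the closed disk; either suffices.

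Your Stage~2 actually goes beyond the paper: the paper's proof stops once $\gamma\in\Omega_{\mathrm{alg}}\mathrm{SU}(n)$ and does not explicitly argue that $\gamma$ lies in the $\mathrm{SU}(n)$-Bruhat cell labelled by the \emph{same} $\xi$. Your disjointness argument via the flag-dimension invariant is a valid way to close this gap. A shorter route, presumably what the paper has in mind, is to observe that since $\det\Psi=c$ is a nonzero constant, one may set $\Psi':=\Psi\cdot\mathrm{diag}(c^{-1},1,\ldots,1)\in\Lambda^+_{\mathrm{alg}}\mathrm{SL}_n(\C)$; the diagonal factor commutes with $\gamma_\xi$ and fixes $H^n_+$, so $W=\Psi'\gamma_\xi H^n_+$ exhibits $\gamma\in U_\xi(\mathrm{SU}(n))$ directly, without invoking the Bruhat decomposition a second time.
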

\begin{proof}
  Let $\gamma\in U_\xi(\mathrm{U}(n))$. Then $\gamma H^n_+=\Psi \gamma_\xi H^n_+$ for some $\Psi\in \Lambda^+_{\mathrm{alg}}\mathrm{U}(n)$. Hence
  \begin{equation}\label{wedge}
  \wedge^n\gamma H^n_+ = \wedge^n\Psi\gamma_\xi H^n_+= \wedge ^n\Psi W_\xi=\det (\Psi)\wedge^n W_\xi=\det (\Psi)\wedge^n H^n_+.
  \end{equation}
 Since $\Psi\in \Lambda^+_{\mathrm{alg}}\mathrm{U}(n)$, $\det (\Psi)$ is polynomial in $\lambda$, hence  $\wedge^n\gamma H^n_+\subseteq \wedge^n H^n_+.$ Conversely, from \eqref{wedge} we see that
  \begin{equation*}\label{wedge1}
  \wedge^n H^n_+=\det (\Psi^{-1})\wedge^n\gamma H^n_+;
  \end{equation*}
 and since  $\det (\Psi^{-1})$ is also  polynomial in $\lambda$,  we conclude that $\wedge^n H^n_+ \subseteq  \wedge^n\gamma H^n_+.$
\end{proof}

In particular, if $\xi\in \mathfrak{I}(\mathrm{SU}(n))$, all the extended solutions $W:M\setminus D\to U_\xi(\mathrm{SU}(n))$ arise from a Frenet frame of the form \eqref{frenetframe} without  any further restriction on the choice of the meromorphic data.

\subsection{Canonical elements of $\mathrm{SU}(n)$}

Let $E_i$ be the $(n\times n)$-matrix whose  $(i,i)$ entry is $\sqrt{-1}$ and  whose other entries are all $0$.
The algebra of diagonal matrices
$$\mathfrak{t}^\C=\big\{\sum a_iE_i:\, a_i\in \C, \, \sum a_i=0\big\}$$
is a
Cartan subalgebra  of $\mathfrak{su}(n)^\C=\mathfrak{sl}(n,\C)$.
Let $L_i$ in the dual of $\mathfrak{t}^\C$ be defined by $L_i(\sum a_jE_j)=\sqrt{-1}a_i$.
The corresponding set of roots $\Delta\in \sqrt{-1} \mathfrak{t}^*$ is given by $\Delta=\{L_i-L_j:\, i,j=1,\ldots,n\}$ and $\Delta^+=\{L_i-L_j:\,i<j\}$  is a set of positive roots. The positive simple roots are then the roots of the form  $\alpha_i=L_i-L_{i+1}$, with $i=1,\ldots,n-1$, and the dual basis of $\mathfrak{t}$ is formed by the matrices
\begin{align*}
H_i=\frac{n-i}{n}E_1+\ldots+ \frac{n-i}{n}E_i- \frac{i}{n}E_{i+1}-\ldots-\frac{i}{n}E_{n}.
\end{align*}

The Lie group $\mathrm{SU}(n)$ is precisely the simply connected Lie group with Lie algebra $\mathfrak{su}(n)$ and its center is $\mathbb{Z}_n$. The integer lattice $\mathfrak{I}(\mathrm{SU}(n)/\mathbb{Z}_n)$ is simply $\{\sum n_iH_i:\, n_i\in \mathbb{Z}\}$ and its $I$-canonical elements with respect to $\Delta^+$ are the sums $\sum_{i\in I}H_i$, with $I\subseteq \{1,\ldots,n-1\}$. The $I$-canonical elements of $\mathfrak{\mathrm{SU}}(n)$ are not so easy to identify. We need to find the integral combinations of the elements $H_i$ which are in $\mathfrak{I}'(\mathrm{G})\cap \mathfrak{C}_{I}$ (that is, elements which are simultaneously integral combinations of the elements $H_i$ and of the elements $E_i$) and are maximal with respect to $\preceq$.    For example,
when $n$ is odd, it is easy to check that
$\xi=H_1+H_2+\ldots + H_{n-1}$ is the unique $[n-1]$-canonical element of $\mathrm{SU}(n)$ with respect to $\Delta^+$, where $[p]=\{1,\ldots,p\}$. But
when $n>2$ is even there always exist more than one $[n-1]$-canonical element. The following lemma is useful in order to describe all canonical elements of $\mathrm{SU}(n)$:

\begin{lem}\label{chis} \emph{The integer lattice $\mathfrak{I}(\mathrm{SU}(n))$ is invariant with respect to the linear isomorphism $\chi_1:\mathfrak{t}^\C\to \mathfrak{t}^\C$ defined by $\chi_1(H_i)=H_{n-i}$, with $i\in[n-1]$.
When $n=2m+1$ is odd, $\mathfrak{I}(\mathrm{SU}(n))$ is also invariant with respect to the linear isomorphism $\chi_2:\mathfrak{t}^\C\to \mathfrak{t}^\C$ defined by $\chi_2(H_i)=H_{2i}$ and $\chi_2(H_{n-i})=H_{n-2i}$ if  $i\in\{1,\ldots,m\}$.}
\end{lem}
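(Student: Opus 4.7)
The plan is to pin down $\mathfrak{I}(\SU(n))$ inside $\mathfrak{I}(\SU(n)/\mathbb{Z}_n)=\mathbb{Z}H_1\oplus\cdots\oplus\mathbb{Z}H_{n-1}$ by a single linear congruence modulo $n$, and then read off the invariance under $\chi_1$ and $\chi_2$ from short modular-arithmetic identities. The main obstacle is this first step, which is a careful but elementary computation; once it is done, the rest is formal.

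First I would use the explicit formula
$$H_i=\frac{n-i}{n}(E_1+\cdots+E_i)-\frac{i}{n}(E_{i+1}+\cdots+E_n)$$
to expand an arbitrary integer combination $\xi=\sum_{i=1}^{n-1} c_i H_i$ in the $E_j$-basis. Separating the cases $i\geq j$ and $i<j$ and rearranging yields
$$\mathrm{coeff}_{E_j}(\xi)=\sum_{i\geq j} c_i-\frac{1}{n}\sum_{i=1}^{n-1} i\,c_i,$$
and since the first term is automatically integral, integrality of every $E_j$-coordinate is equivalent to the single congruence
$$\sum_{i=1}^{n-1} i\,c_i \equiv 0 \pmod n. \qquad (\star)$$
Thus $\mathfrak{I}(\SU(n))$ is the sublattice of $\mathbb{Z}H_1\oplus\cdots\oplus\mathbb{Z}H_{n-1}$ cut out by $(\star)$, of index $n$, as expected from $Z(\SU(n))\cong\mathbb{Z}_n$.

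With $(\star)$ available, both invariance statements reduce to one-line checks. For $\chi_1$, since $\chi_1(\xi)=\sum_i c_i H_{n-i}=\sum_j c_{n-j} H_j$, one has $\sum_j j\,c_{n-j}=\sum_i (n-i)c_i\equiv -\sum_i i\,c_i \equiv 0 \pmod n$. For $\chi_2$ in the odd case $n=2m+1$, I would first observe that the two-block definition collapses into the uniform formula $\chi_2(H_i)=H_{2i\bmod n}$ for every $i=1,\ldots,n-1$: indeed $2i\bmod n=2i$ when $i\leq m$, and $2(n-i)\bmod n=n-2i$ when $i\leq m$. The oddness of $n$ makes $i\mapsto 2i\bmod n$ a permutation of $\{1,\ldots,n-1\}$, so $\chi_2$ is a well-defined linear isomorphism. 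Writing $\chi_2(\xi)=\sum_k c'_k H_k$ and reindexing by $k=2i\bmod n$, condition $(\star)$ for $\chi_2(\xi)$ reads
$$\sum_k k\,c'_k=\sum_i (2i\bmod n)\, c_i\equiv 2\sum_i i\,c_i\equiv 0 \pmod n,$$
which holds by $(\star)$ for $\xi$ and finishes the proof.
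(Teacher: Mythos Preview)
Your proof is correct and follows the same approach as the paper: both reduce membership in $\mathfrak{I}(\SU(n))$ to the single congruence $\sum_i i\,c_i\equiv 0\pmod n$ and then check that $\chi_1$ and $\chi_2$ preserve it. Your handling of $\chi_2$ via the uniform formula $\chi_2(H_i)=H_{2i\bmod n}$ (valid because $n$ is odd, so doubling is a permutation of $\{1,\dots,n-1\}$ modulo $n$) is a clean conceptual shortcut for the paper's explicit two-block computation \eqref{nis}--\eqref{nis1}, but the underlying argument is the same.
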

\begin{proof}
  As we have observed before, an element of $\mathfrak{t}$ is in $\mathfrak{I}(\mathrm{SU}(n))$  if and only if its coefficients in $E_i$ are integers. Hence, taking account that $H_i=E_1+\ldots+E_i-\frac{i}{n}(E_1+\ldots+E_n)$,  an integer linear combination $\sum_{i=1}^{n-1}n_iH_i$ is in $\mathfrak{I}(\mathrm{SU}(n))$ if and only if $\sum_{i=1}^{n-1}\frac{in_i}{n}$ is an integer number, and this happens if and only if $\sum_{i=1}^{n-1}\frac{(n-i)n_i}{n}$ is integer. Hence  $\mathfrak{I}(\mathrm{SU}(n))$ is invariant with respect to $\chi_1$.

Assume now that $n=2m+1$. In this case,
\begin{equation}\label{nis}
\sum_{i=1}^{n-1}\frac{in_i}{n}=\sum_{i=1}^{m}\frac{in_i}{n}+ \sum_{i=1}^{m}\frac{(n-i)n_{n-i}}{n}=\sum_{i=1}^{m}\frac{in_i}{n}- \sum_{i=1}^{m}\frac{in_{n-i}}{n}+\sum_{i=1}^{m}n_{n-i}.
\end{equation}
On the other hand, if we set
\begin{equation*}
\sum_{i=1}^{n-1}n'_iH_i=\chi_2\big(\sum_{i=1}^{n-1}n_iH_i\big)=\sum_{i=1}^mn_iH_{2i}+\sum_{i=1}^mn_{n-i}H_{n-2i},
\end{equation*}
we get
\begin{equation}\label{nis1}
\sum_{i=1}^{n-1}\frac{in'_i}{n}=\sum_{i=1}^m\frac{2i n_i}{n}+\sum_{i=1}^m\frac{(n-2i)n_{n-i}}{n}=2\sum_{i=1}^m\frac{i n_i}{n}-2\sum_{i=1}^m\frac{in_{n-i}}{n}+\sum_{i=1}^{m}n_{n-i}.
\end{equation}
Comparing \eqref{nis} with \eqref{nis1} we conclude that $\sum_{i=1}^{n-1}\frac{in'_i}{n}\in \mathbb{Z}$ if $\sum_{i=1}^{n-1}\frac{in_i}{n}\in \mathbb{Z}$, that is  $\mathfrak{I}(\mathrm{SU}(n))$ is invariant with respect to $\chi_2$.
\end{proof}

For each $i\in \{1,\ldots,n-1\}$, let  $m_i$ be the least positive integer which makes $m_iH_i$  and integral combination of the elements $E_i$. Since $H_i=E_1+\ldots+E_i-\frac{i}{n}(E_1+\ldots+E_n)$, $m_i$  is precisely the denominator of the irreducible fraction equivalent to $\frac{i}{n}$. The canonical elements should then be sought among the elements of the finite set formed by the integral combinations $\sum_{i=1}^{n-1}n_iH_i$, with $n_i\in\{0,\ldots,m_i\}$, which are simultaneously integral combinations of the elements $E_i$. For  general $n$ and $I\subseteq\{1,\ldots,n-1\}$ it is too hard to list all the $I$-canonical elements.
Next we will describe in detail the situation for the lower dimensional cases. We shall denote  by $\pi_i$ the orthogonal projection of $\C^n$  onto the one-dimensional vector subspace of $\C^n$ generated by the vector $e_i$.
%
%
%
%
%
%
%
%
%
%
%
%
%
%
%
%
%

\subsection{The case $\mathrm{SU}(2)$}

In this case there is a unique simple root $\alpha_1$ with dual $H_1=\frac12(E_1-E_2)$, which does not belong to the integer lattice $\mathfrak{I}(\mathrm{SU}(2))$. Consequently    $\xi=2H_1$ is the unique non-trivial canonical element -- the corresponding homomorphism is $\gamma_\xi(\lambda)=\lambda^{-1}\pi_1+\lambda \pi_1^\perp$.    If $W:M\setminus D\to U_\xi(\mathrm{SU}(n))$ is a complex extended solution, then the corresponding $\mathrm{S}^1$-invariant solution is given by $u_\xi(W)=\lambda^{-1} A+A+\lambda H_+^2$, where $A$ is a holomorphic subbundle of $\underline{\C}^2$. It follows from the super-horizontality property that $A$ is a constant bundle. Hence,
we have
$W=L+A+\lambda H_+^2$, where $L$ is a holomorphic line bundle of $A\lambda^{-1}+A^\perp$, with $p_{-1}(L)\neq 0$ off a discrete set of points, where $p_{-1}$ is defined as in \eqref{pi}. That is, any harmonic map of finite uniton number from $M$ into $\mathrm{SU}(2)\simeq \mathrm{S}^3$ arises from a constant direction  $A$ and a holomorphic line bundle of  $\lambda^{-1} A+A^\perp\simeq \underline{\C}^2$. This agrees with the well known result by Calabi \cite{calabi_1967} that asserts that any locally minimal immersion of a surface in an odd dimensional sphere $S^{2m-1}$ is contained in a hyperplane of $\mathbb{R}^{2m}$. In particular, no harmonic map of finite uniton number from $M$ into $\mathrm{SU}(2)\simeq \mathrm{S}^3$ is full. This means that any such harmonic map takes values in a unit two-dimensional sphere $\mathrm{S}^2\simeq \C \mathrm{P}^1$, that is, it corresponds to an holomorphic line bundle of $\underline{\C}^2$.

%
%
%
%
%
%
%
%
%
%

\subsection{The case $\mathrm{SU}(3)$}\label{3s}

We have two simple roots, $\alpha_1$ and $\alpha_2$, and  three non-trivial canonical elements:
\begin{align*}
 \xi_1=H_1+H_2=E_1-E_3;\,\,\,\,\,
\xi_2=3H_1=2E_1-E_2-E_3;\,\,\,\,\, \xi_3=3H_2=E_1+E_2-2E_3.
\end{align*}
 The corresponding homomorphisms are given by
\begin{align*}
 \gamma_{\xi_1}(\lambda)=\lambda^{-1}\pi_3+\pi_2+\lambda \pi_1;\,\,\,\gamma_{\xi_2}(\lambda)=\lambda^{-1}\left(\pi_2+\pi_3\right)+\lambda^2\pi_1;\,\,\,\gamma_{\xi_3}(\lambda)=\lambda^{-2}\pi_3+\lambda(\pi_2+\pi_1).
\end{align*}

If $W_{\xi_1}:M\setminus D\to U_{\xi_1}(\mathrm{SU}(3))$ is a complex extended solution, then the corresponding $\mathrm{S}^1$-invariant solution is given by
$u_{\xi_1}(W_{\xi_1})=\lambda^{-1} B_3+\left(B_2\oplus B_3\right)+\lambda H^3_+,$
where $B_3$ is a holomorphic line subbundle of the holomorphic vector bundle $B_2\oplus B_3$ of rank $2$.
 In order to construct all such extended solutions, and taking account the results of section \ref{construction}, we start with a meromorphic section $s_3$ of $\underline{\C}^3$ and set $B_3=\mathrm{Span}\{s_3\}$.
If $B_3$ is not constant, we define $B_{2}=B_3^{(1)}\ominus B_3$, take an arbitrary holomorphic section $w_3$ of $\underline{\C}^3$ and set $X_{\xi_1}=\mathrm{Span}\{\lambda^{-1}s_3+w_3\}$. If $B_3$ is constant, we take an arbitrary meromorphic section $s_2$. By adding a constant if necessary, $s_2$ and $s_3$ are linearly independent and we set $B_2=\mathrm{Span}\{s_2,s_3\}\ominus B_3$. Take an arbitrary holomorphic section $w_3$ of $\underline{\C}^3$ and set $X_{\xi_1}=\mathrm{Span}\{\lambda^{-1}s_3+w_3,s_2\}$. In both cases, $X_{\xi_1}$ is a Frenet frame for an extended solution
$W_{\xi_1}:M\setminus D\to U_{\xi_1}(\mathrm{SU}(3))$.

If $W_{\xi_2}:M\setminus D\to U_{\xi_2}(\mathrm{SU}(3))$ is a complex extended solution, then the corresponding $\mathrm{S}^1$-invariant solution is given by
$$u_{\xi_2}(W_{\xi_2})=\lambda^{-1}(B_2\oplus B_3)+(B_2\oplus B_3)+\lambda(B_2\oplus B_3)+\lambda^2 H_+^3.$$
By the super-horizontality property, $B_2\oplus B_3$ is constant, and consequently $B_1$, the orthogonal complement of $A_2\oplus A_3$, is also constant. In order to construct all such extended solutions, fix a two-dimensional subspace with  basis elements $s_2$ and $s_3$. Take arbitrary meromorphic sections $w_2$ and $w_3$ of $\underline{\C}^3+\lambda \underline{\C}^3$ with $\pi_{B_1}(p_0(w_2))$ and $\pi_{B_1}(p_0(w_3))$ constants, where $p_0$ is the projection defined by \eqref{pi}. Then $X_{\xi_2}=\mathrm{Span}\{s_2\lambda^{-1}+w_2, s_3\lambda^{-1}+w_3  \}$  is a Frenet frame for an extended solution
$W_{\xi_2}:M\setminus D\to U_{\xi_2}(\mathrm{SU}(3))$, and all such extended solutions arise in this way.
We observe that, taking account Lemma 3.17 and Proposition 3.18 of \cite{svensson_wood_2010}, the corresponding harmonic map has \emph{uniton number} one, in the sense that it admits an extended solution with values in $\Omega\mathrm{U}(3)$ of the form $\pi_V+\lambda \pi_V^\perp$, with $V$ a holomorphic subbundle of $\underline{\C}^3$.
The case $W_{\xi_3}$ is similar.

\subsection{The cases $\mathrm{SU}(4)$ and $\mathrm{SU}(5)$}\label{45}

 Table \ref{cococo} shows all the non-trivial canonical elements of $\mathrm{SU}(4)$ and $\mathrm{SU}(5)$ up to the symmetries $\chi_1,\chi_2$ of Lemma \ref{chis}.

\begin{table}[!htb]
\begin{tabular}{c|c|c|c|c} $\SU(n)$ & $|I|=n-1$ & $|I|=n-2$ & $|I|=n-3$ & $|I|=n-4$\\ \hline $n=4$ &$H_1+2H_2+H_3$ & $2H_1+H_2$ & $4H_1$ & \\ & $3H_1+H_2+H_3$  & $H_1+H_3$ & $2H_2$ & \\ \hline $n=5$ & $H_1+H_2+H_3+H_4$ &$H_1+H_2+4H_3$ & $H_1+2H_2$ & $5H_1$\\ & & $H_1+3H_2+H_3$ & $3H_1+H_2$ & \\ & &  $2H_1+H_2+2H_3$  & $H_1+H_4$ &\\ & & $3H_1+2H_2+H_3$ & \\ & & $5H_1+H_2+H_3$   & \end{tabular}

\vspace{.10in}
\caption{Canonical elements for $\mathrm{SU}(4)$ and $\mathrm{SU}(5)$.}\label{cococo}
\end{table}

We describe  how to construct, for
$\xi_1=H_1+2H_2+H_3=2E_1+E_2-E_3-2E_4,$
all  the extended solutions $W_{\xi_1}:M\setminus D\to U_{\xi_1}(\mathrm{SU}(4))$.
We have $\gamma_{\xi_1}=\lambda^{-2}\pi_4+\lambda^{-1}\pi_3+\lambda\pi_2+\lambda^2\pi_1$, and, consequently,
$$u_{\xi_1}(W_{\xi_1})=\lambda^{-2} B^1_4+\lambda^{-1}\left(B^1_3\oplus B^1_4\right)+\left(B^1_3\oplus B^1_4\right)+\lambda\left(B^1_2\oplus B^1_3\oplus B^1_4\right)+ \lambda^2H_+^4, $$
where each vector subbundle $B^1_i$ has rank one.
The harmonic map associated to this $\mathrm{S}^1$-invariant extended solution is given by
$$\varphi_1=\pi_{B^1_1\oplus B^1_4}-\pi_{B^1_2\oplus B^1_3}.$$
By super-horizontality, $B^1_3\oplus B^1_4$ is a constant bundle. So, in order to construct all such extended solutions, we start by fixing a two-dimensional  vector subspace $V$ of $\C^4$ generated by constant vectors $u$ and $v$. Next take  meromorphic sections:
 \begin{enumerate}
  \item $s_1$ of $\underline{V}$,  and set $B^1_4=\mathrm{Span}\{s_1\}$ and $B^1_3=\underline{V}\ominus B^1_4$;
  \item  $s_3$ of $\underline{V}^\perp$, and set $B^1_2=\mathrm{Span}\{s_3\}$ and $B^1_1=\underline{V}^\perp\ominus B^1_2$;
  \item $w_1$ of $M\times  H^4_+/\lambda^3 H_+^4$.
 \end{enumerate} If $B^1_4$ is not constant, we can write
 $s_1''=g_1s_1+g_2s'_1$ for some meromorphic functions $g_1$ and $g_2$ on $M$, with $g'_1g_2-g'_2g_1\neq 0$. In this case,
$X=\mathrm{Span}\{\lambda^{-2}s_1+\lambda^{-1}w_1, \lambda s_3\}$ is a Frenet frame for an extended solution with values in $U_{\xi_1}(\mathrm{SU}(4))$ if and only if
$$\pi_{B^1_1}\circ p_0(w_1''-g_1w_1-g_2w_1')=0.$$

 For
$\xi_2=3H_1+H_2+H_3=3E_1-E_3-2E_4,$
we have $\gamma_{\xi_2}=\lambda^{-2}\pi_4+\lambda^{-1}\pi_3+\pi_2+\lambda^3\pi_1$, and, consequently,
\begin{align*}
u_{\xi_2}(W_{\xi_2})=\lambda^{-2} B^2_4&+\lambda^{-1}\left(B^2_3\oplus B^2_4\right)\\&+\left(B^2_2\oplus B^2_3\oplus  B^2_4\right)+\lambda\left(B^2_2\oplus B^2_3\oplus B^2_4\right)+ \lambda^2\left(B^2_2\oplus B^2_3\oplus B^2_4\right)+\lambda^3 H_+^4. \end{align*}
The harmonic map associated to this $\mathrm{S}^1$-invariant extended solution is given by
$$\varphi_2=\pi_{B^2_2\oplus B^2_4}-\pi_{B^2_1\oplus B^2_3}.$$
 Although $\varphi_1$ and $\varphi_2$ are both of the form $\pi_E-\pi_E^\perp$, with $E$ a rank two vector subbundle of $\underline{\C}^4$, these vector bundles exhibit distinct geometrical behaviours. For example, whereas $E=B^2_2\oplus B^2_4$ has always constant $(2)$-osculating bundle,  both $E=B^1_1\oplus B^1_4$ and $E^\perp=B^1_2\oplus B^1_3$ can have
 non-constant  $(2)$-osculating bundle.

%
%

\subsection{Symmetric canonical elements of $\mathrm{SU}(n)$}

All the compact inner symmetric spaces of $\mathrm{SU}(n)$ are complex grassmannians $\mathrm{Gr}(k,n)$.
  The embedding $\iota$ of $\mathrm{Gr}(k,n)$ as a connected component of $\sqrt{e}$ is given by $\iota (V) =\pi_V - \pi_V^\perp$, where $V\in \mathrm{Gr}(k,n)$ is $k$-dimensional subspace of $\C^n$.  There exists no non-trivial symmetric canonical element  for $\mathrm{SU}(2)$.  Table \ref{xixixi} presents all  non-trivial symmetric canonical elements for $\mathrm{SU}(n)$, with $n=3,4,5$, up to the symmetries $\chi_1,\chi_2$.
   As before, for each $i\in \{1,\ldots,n-1\}$, let $m_i$ be the least positive integer which makes $m_iH_i$  and integral combination of the elements $E_i$.  The symmetric canonical elements should then be sought among the elements of the finite set formed by the integral combinations $\sum_{i=1}^{n-1}n_iH_i$, with $n_i\in\{0,\ldots,2m_i-1\}$, which are simultaneously integral combinations of the elements $E_i$.

   We use the usual hermitian inner product on $\C^n$ to identify $\mathrm{Gr}(k,n)$ with $\mathrm{Gr}(n-k,n)$. It is easy to check that, for $\xi\in \mathfrak{I}(\mathrm{SU}(n))$, $N_{\xi}=N_{\chi_1(\xi)}$.
However, in general, the symmetric space $N_{\chi_2(\xi)}$ does not coincide with $N_\xi$.  For example, in $\mathrm{SU}(5)$ the two following situations can occur: for $\xi=5H_1$, we have $\chi_2(\xi)=5H_2$, $N_\xi= \mathrm{Gr}(1,5)$ and $N_{\chi_2(\xi)}= \mathrm{Gr}(2,5)$; on the other hand, for $\eta=3H_1+H_2+5H_3$, we have $N_{\eta}=N_{\chi_2(\eta)}=\mathrm{Gr}(2,5)$.

\begin{table}[!htb]
\begin{tabular}{ c| c| c| c| c }
 $ \mathrm{Gr}(k,n)$ & $|I|=n-1$ & $|I|=n-2$ & $|I|=n-3$ & $|I|=n-4$\\ \hline
$k=1,n=3$ & $H_1+H_2$ & $3H_1$ & &\\ & $4H_1+H_2$ & & & \\ \hline
$k=2,n=4$  & $3H_1+H_2+H_3$ & $2H_1+H_2$ &  & \\ &  &$ H_1+H_3$ &  & \\

 \hline $k=1, n=5$ & $4H_1+2H_2+H_3+H_4$&$ H_1+H_2+4H_3$  & $H_1+2H_2$ & $5H_1$\\ & &&  $H_1+7H_2$ & \\ && & $3H_1+H_2$&\\ & && $H_1+6H_4$    \\

 \hline $k=2, n=5$ & $H_1+H_2+H_3+H_4$ & $H_1+H_2+9H_3$  & $4H_1+3H_2$ & \\ & $2H_1+3H_2+H_3+H_4$ &$H_1+3H_2+H_3$ & $8H_1+H_2$& \\ &$H_1+H_2+H_3+6H_4$ & $H_1+8H_2+H_3$ & $ H_1+H_4$ &\\ & & $2H_1+H_2+2H_3$& \\ & & $3H_1+H_2+5H_3$  & \\ & & $3H_1+2H_2+H_3$   & \\ & &  $5H_1+H_2+H_3$ &
 \end{tabular}
\vspace{.10in}
\caption{Symmetric canonical elements for $\mathrm{SU}(n)$, with $n\leq 5$.} \label{xixixi}
\end{table}

We describe  how to construct, for $n=4$, $k=2$ and
$\xi_1=2H_1+H_2=2E_1-E_3-E_4,$
all  the extended solutions $W_{\xi_1}:M\setminus D\to U^{\mathcal{I}}_{\xi_1}(\mathrm{SU}(4))$.
We have $\gamma_{\xi_1}=\lambda^{-1}(\pi_3+\pi_4)+\pi_2+\lambda^2\pi_1$ and, consequently,
$$u_{\xi_1}(W_{\xi_1})=\lambda^{-2} (B^1_4\oplus B^1_3)+(B^1_4\oplus B^1_3\oplus B^1_2)+\lambda (B^1_4\oplus B^1_3\oplus B^1_2)+ \lambda^2H_+^4, $$
where each vector subbundle $B^1_i$ has rank one.
The harmonic map associated to this $\mathrm{S}^1$-invariant extended solution is given by
$\varphi_1=\pi_{B^1_1\oplus B^1_2}-\pi_{B^1_3\oplus B^1_4}.$
So, take meromorphic sections $s_1,s_2,w_1,w_2$ of $\underline{\C}^4$ and set $B_3\oplus B_4=\mathrm{Span}\{s_1,s_2\}$. Assuming that this vector bundle is not constant, $X_{\xi_1}=\mathrm{Span}\{\lambda^{-1}s_1+\lambda w_1,\lambda^{-1}s_2+\lambda w_2\}$ will be a Frenet frame for an extended solution with values in $U^{\mathcal{I}}_{\xi_1}(\mathrm{SU}(4))$. Moreover, all such extended solutions, with $B_3\oplus B_4$ not constant, arise in this way.

Consider also the case $n=4$, $k=2$ and  $\xi_2=H_1+H_3=E_1-E_4$. We have $\gamma_{\xi_2}=\lambda^{-1}\pi_4+(\pi_3\oplus\pi_2)+\lambda\pi_1$ and, consequently,
$u_{\xi_2}(W_{\xi_2})=\lambda^{-1} B^2_4+B^2_4\oplus B^2_3\oplus B^2_2+\lambda H_+^4. $
The harmonic map associated to this $\mathrm{S}^1$-invariant extended solution is given by
$\varphi_2=\pi_{B^2_2\oplus B^2_3}-\pi_{B^2_1\oplus B^2_4}.$ Observe that, in this case, we only have $S^1$-invariant extended solutions since $U^{\mathcal{I}}_{\xi_2}(\mathrm{SU}(4))=\Omega_{\xi_2}$.

\providecommand{\bysame}{\leavevmode\hbox to3em{\hrulefill}\thinspace}
\providecommand{\MR}{\relax\ifhmode\unskip\space\fi MR }
\providecommand{\MRhref}[2]{%
  \href{http://www.ams.org/mathscinet-getitem?mr=#1}{#2}
}
\providecommand{\href}[2]{#2}


\begin{thebibliography}{10}

\bibitem{burstall_guest_1997}
F.E. Burstall and M.A. Guest, \emph{{Harmonic Two-Spheres in Compact Symmetric
  Spaces, revisited}}, Math. Ann. \textbf{309} (1997), no.~4, 541--572.

\bibitem{burstall_rawnsley_1990}
F.E Burstall and J.H. Rawnsley, \emph{{Twistor Theory for Riemannian Symmetric
  Spaces with Applications to Harmonic Maps of Riemann Surfaces}}, Lecture
  Notes in Mathematics, no. 1424, Springer-Verlag, Berlin, Heidelberg, 1990.

\bibitem{burstall_wood_1986}
F.E Burstall and J.C. Wood, \emph{{The Construction of Harmonic Maps into
  Complex Grassmannians}}, J. Differential Geom. \textbf{23} (1986), no.~3,
  255--297.

\bibitem{calabi_1967}
E.~Calabi, \emph{{Minimal Immersions of Surfaces in Euclidean Spheres}}, J.
  Differential Geom. \textbf{1} (1967), 111--125.

\bibitem{correia_pacheco_2}
N.~Correia and R.~Pacheco, \emph{{Singular Dressing Actions on Harmonic Maps}},
  Q. J. Math. \textbf{62} (2011), no.~1, 71--85.

\bibitem{correia_pacheco_3}
\bysame, \emph{{Harmonic Maps of Finite Uniton Number into $\G_2$}}, Math. Z.
  \textbf{271} (2012), no.~1--2, 13--32.

\bibitem{eells_lemaire}
J.~Eells and L.~Lemaire, \emph{{Another report on harmonic maps}}, Bull. Lond.
  Math. Soc. \textbf{20} (1988).

\bibitem{ferreira_simoes_wood_2010}
M.J. Ferreira, B.A. Sim\~{o}es, and J.C. Wood, \emph{{All harmonic 2-spheres in
  the unitary group, completly explicitly}}, Math. Zeit. \textbf{266} (2010),
  953--978.

\bibitem{guest_2002}
M.A. Guest, \emph{{An update on Harmonic Maps of Finite Uniton Number, via the
  Zero Curvature Equation}}, Integrable Systems, Topology, and Physics: A
  Conference on Integrable Systems in Differential Geometry (M.~Guest,
  R.~Miyaoka, and Y.~Ohnita, eds.), Contemporary Mathematics, no. 309, Amer.
  Math. Soc., 2002, pp.~85--113.

\bibitem{pacheco_2006}
R.~Pacheco, \emph{{Harmonic Two-Spheres in the Sympletic Group $\Sp(n)$}},
  Internat. J. Math. \textbf{17} (2006), no.~3, 295--311.

\bibitem{pressley_segal}
A.~Pressley and G.~Segal, \emph{{Loop Groups}}, Oxford Mathematical Monographs,
  2003.

\bibitem{segal_1989}
G.~Segal, \emph{{Loop Groups and Harmonic Maps}}, Advances in Homotopy Theory
  (S.M. Salamon, B.~Steer, and W.A. Sutherland, eds.), Lecture Note Series, no.
  139, London Mathematical Society, 1989, pp.~153--164.

\bibitem{svensson_wood_2010}
M.~Svensson and J.C. Wood, \emph{{Filtrations, Factorizations and Explicit
  Formulae for Harmonic Maps}}, Comm. Math. Phys. \textbf{310} (2012), no.~1,
  99--134.

\bibitem{uhlenbeck_1989}
K.~Uhlenbeck, \emph{{Harmonic Maps into Lie Groups (classical solutions of the
  chiral model)}}, J. Differential Geom. \textbf{30} (1989), no.~1, 1--50.

\end{thebibliography}
\end{document}